\newcommand{\prd}{\mathcal{P}^2(\rd)}
\newcommand{\pK}{\mathcal{P}^2(\mathcal{K})}
\begin{document}
	\title{Approximation of deterministic mean field type control systems}
	\author{Yurii Averboukh}
	\date{}
	
\AtEndDocument{
	\vspace{10pt}
	\begin{tabular}{ll}
		Yurii Averboukh:  &
		Krasovskii Institute of Mathematics and Mechanics,  \\ & 
		16, S. Kovalevskaya str,  Yekaterinburg, Russia \\ & 
		\email{ayv@imm.uran.ru}
	\end{tabular}}
	\maketitle
\begin{abstract}
	The paper is concerned with the approximation of the deterministic the mean field type control system by a mean field  Markov chain. It turns out that the  dynamics of the distribution in the approximating system is described by a system of ordinary differential equations. Given a strategy for the Markov chain, we explicitly construct a control in the deterministic mean field type control system. Our method is a realization of the model predictive approach.  The converse construction is also presented. These results lead to an estimate of the Hausdorff distance between the bundles of motions in the deterministic mean field type control system and the mean field Markov chain. Especially, we pay the attention to the case when one can approximate the bundle of motions in the mean field type system by  solutions of a finite systems of ODEs.
	\msccode{93A14,93B11,93C10,93E10}
	\keywords{mean field type control, bundle of motions, mean field Markov chain, model predictive control}
\end{abstract}
\section{Introduction}

The paper studies mean field type control systems. These systems describe an evolution of many identical agents who play cooperatively and interact via some external field. The mean field type control systems appear within the modeling of swarm of robots, pedestrian flows, etc \cite{Colombo2005,Colombo2011,Cristiani2014,Bellomo2012,Bullo2009}. 

The concept of mean field models comes back to the model of plasma proposed by Vlasov in 1938~\cite{Vlasov,Vlasov_book} and was formalized within the theory of McKean-Vlasov equation~\cite{McKean,Sznitman}. We will focus on the deterministic (first-order) mean field type systems. This means that the dynamics of each agent is described by an ordinary differential equation on the Euclidean space with the right-hand side depending on his/her state, control and the distribution of all agents. 

Notice that the settings of the control theory include the study of optimal control problems as well as the examination of the qualitative properties of the bundle of trajectories. 

The mean field type optimal control theory started with paper~\cite{ahmed_ding_controlld}. Nowadays, for the second order mean field type control system, i.e., when the dynamics of each agent is determined by a stochastic differential equation, the necessary and optimality conditions  are derived (see~\cite{Bayraktar_Cosso_Pham_randomized,Buckdahn_Boualem_Li_PMP_SDE,Carmona_Delarue_PMP,Djehiche_Hamdine,Lauriere_Pironneau_DPP_MF_control,Pham_Wei_2015_Bellman,Pham_Wei_2015_DPP_2016}). The case of first-order mean field type optimal control problems was studied in papers~\cite{Jimenez_Marigonda_Quincampoix,Pogodaev,averboukh_khlopin}, where the variants of dynamic programming and Pontryagin maximum principle were obtained.

The qualitative theory for mean field type control systems studies the general properties of  bundle of motions as well as viability theory issues. For the deterministic mean field type control systems, the existence theorem was proved under the general assumption on the dynamics~\cite{Bivas2021a,Averboukh2022,Jimenez_Marigonda_Quincampoix}, while the Filippov and relaxation theorem are derived under additional assumption of continuity of the vector  field~\cite{Bonnet2021,bonnet2023}. The viability theorem was obtained in the terms of tangent cones (see~\cite{Averboukh2018a,Bonnet2022}) and in the terms of proximal normal cones~\cite{Averboukh2021}.

The main object of the paper is an approximation of the bundle of motions of the first order mean field type control system. Our approach is based on so called Markov approximations. They appear when one replace the ODE determining the dynamics of each agent by a continuous-time mean field Markov chain. The latter can be regarded as a system of infinitely many similar agents with the dynamics determined by  transition rates depending, in particular, on a current distribution of agents. In this case, the dynamics of the whole distribution of agents is described by a nonlinear Kolmogorov equation. If, additionally, one can assure that the agents in the original system do not leave a compact space, the phase space for the approximating Markov chain can be chosen to be finite. This leads to an approximation of the first-order mean field type control system by a finite system of ODE. 

The approximation of the deterministic control system by a Markov chain was proposed in~\cite{Averboukh2016}. In that paper, the approximation of the value function of the zero-sum differential game was studied based on stochastic control with guide approach coming back to research of Krasovskii and Kotelnikova~\cite{a4,a5,a6,kras_delay}.  This technique was extended to mean field type differential games in~\cite{Averboukh2021a}. There, based on a modification of the extremal shift rule for the Wasserstein space and control of the guide strategies, the  value function of the mean field type differential game was approximated by a solution of a finite dimensional differential game. 

In the paper, we focus on the approximation of the mean field type control system and implement the model predictive control approach (see \cite{Model_predictive} and reference therein). Notice that for the original deterministic mean field type control system we assume the open-loop strategies those are distribution of pairs consisting of an initial state and a control whereas the approximating mean field Markov chain implies the feedback strategies. The latters can be regarded as a sequence of open loop strategies those work at the appropriate state. 

As it was mentioned above, we use the methodology of the model predictive approach. For the considered deterministic mean field  type control system, this means that, given a feedback strategy in Markov chain, an agent uses   on a small time interval a control borrowed from a state in the approximating Markov chain. The weights of the controls are determined by an optimal plan between the distribution in the original and approximating system. To solve the converse problem which implies the design of a feedback strategy in the Markov chain based on a given distribution of controls in the mean field type system, one can on each small time interval integrate the controls according to the optimal plan between distributions. 

The main results of the paper includes also an  approximation rates of the aforementioned constructions. They depend only  on the original system, distance between the original and approximating systems, fineness of partition and the maximal transition rate in the mean field Markov chain. Notice also that, if the fineness of the partition tends to infinity, the limiting approximation rates are determined only by the original system and the distance between the original and approximating systems. This, in particular, provides the distance between the bundles of motions in the deterministic mean field type control system and a finite dimensional control system. The earlier results gave only one-side estimate. 

The rest of the paper is organized as follows. In Section~\ref{sect:notation}, we introduce the general notation. The assumption on the deterministic mean field type control system as well as the class of admissible controls are presented in Section~\ref{sect:1_order}. The approximating mean field Markov chain is introduced in Section~\ref{sect:Markov_chain}. Here, we define the general system and show the way to construct an approximating Markov chain for the original deterministic  mean field type system. Section~\ref{sect:model_for_1_order} deals with the model predictive control for the  deterministic system.  In this case, we assume that a feedback strategy for the mean field Markov chain is given and derive the approximation rate of the constructed motion in the deterministic mean field type system. The model predictive control for the mean field Markov chain  is discussed in Section~\ref{sect:model_for_Markov_chain}. Finally, we estimate the Hausdorff distance between the bundle of motions in the deterministic mean field type control system and the Markov chain (see Section~\ref{sect:Hausdorff}).

\section{General notation}\label{sect:notation}
\begin{itemize}
	\item If $n$ is a natural number, $X_1,\ldots,X_n$ are sets, $i_1,\ldots,i_k$ are  indices from $\{1,\ldots,n\}$, then $\operatorname{p}^{i_1,\ldots,i_k}$ is a projection from $X_1\times\ldots X_n$ to $X_{i_1}\times\ldots X_{i_k}$, i.e,
	\[p^{i_1,\ldots,i_k}(x_1,\ldots,x_n)\triangleq (x_{i_1},\ldots,x_{i_k}).\]
	\item If $(X,\rho_X)$, $(Y,\rho_Y)$ are Polish spaces, then $C_b(X,Y)$ denotes the space of bounded continuous functions from $X$ to $Y$. If $Y$ is a normed vector space, then $C_b(X,Y)$ is also a normed vector space with usual $\sup$-norm. Moreover, $C_b(X)\triangleq C_b(X,\mathbb{R})$.  
	\item We always endow a Polish space $(X,\rho_X)$ with the Borel $\sigma$-algebra denoted by $\mathcal{B}(X)$. Moreover, $\mathcal{M}(X)$ stands for the set of Borel nonnegative measures, whereas $\mathcal{P}(X)$ denotes the set of all Borel probabilities:
	\[\mathcal{P}(X)\triangleq \{m\in \mathcal{M}(X):m(X)=1\}.\] We consider on  $\mathcal{M}(X)$ the topology of narrow convergence, i.e., a sequence of measures $\{m_n\}_{n=1}^\infty$ converges to a measure $m$ if, for each $C_b(X)$,
	\[\int_X\phi(x)m_n(dx)\rightarrow \int_X\phi(x)m(dx)\text{ as }n\rightarrow\infty.\] Notice that $\mathcal{P}(X)$ is closed w.r.t. the narrow convergence.
	\item If $(\Omega,\mathcal{F})$, $(\Omega',\mathcal{F}')$ are measurable spaces, $m$ is a measure on $\mathcal{F}$, whilst $h:\Omega\rightarrow\Omega'$ is $\mathcal{F}/\mathcal{F}'$-measurable, then we denote by $h\sharp m$ the push-forward measure on $\mathcal{F}'$ defined by the rule: for each $\Upsilon\in\mathcal{F}'$,
	\[(h\sharp m)(\Upsilon)\triangleq m(h^{-1}(\Upsilon)).\]
	\item If $(X,\rho_X)$ and $(Y,\rho_Y)$ are two Polish space, $m$ is a measure on $X$, then we denote by $\Lambda(X,m;Y)$ the set of measures on $X\times Y$ with marginal distribution on $X$ equal to $m$, i.e.,
	\[\Lambda(X,m;Y)\triangleq \big\{\alpha\in\mathcal{M}(X\times Y):\operatorname{p}^1\sharp \alpha=m\big\}.\]    
	\item A function $\beta:X\rightarrow \mathcal{P}(X)$ is called weakly measurable if, for each $\phi\in C_b(Y)$, the function
	\[x\mapsto\int_Y\phi(y)\beta(x,dy)\] is measurable. Furthermore, each weakly measurable function $\beta$ generates a measure $m\star\beta\in \Lambda(X,m;Y)$ by the rule: for each $\phi\in C_b(X\times Y)$,
	\[\int_{X\times X}\phi(x,y)(m\star\beta)(d(x,y))\triangleq \int_X\int_Y\phi(x,y)\beta(x,dy)m(dx).\]
	\item If $\alpha\in \Lambda(X,m;Y)$, then there exists a weakly measurable function  $\beta$ such that $\alpha=m\star\beta$ \cite[III-70]{Meyer}. These function is called a disintegration of the measure $\alpha$ and denoted by $x\mapsto \alpha(\cdot|x)$. The disintegration is unique a.e.   
	\item If $(X,\rho_X)$ is a Polish space, $p\geq 1$, then  $\mathcal{P}^p(X)$ is the space of probabilities on $X$ such that, for some (equivalently, every) $x_*\in X$,
	\[\int_X\rho_X^p(x,x_*)m(dx)<\infty.\] If $X$ is a normed vector space, then we will always choose $x_*=0$ and put
	\[\varsigma_p(m)\triangleq\left[\int_X\|x\|^pm(dx)\right]^{1/p}.\]
	\item The space $\mathcal{P}^p(X)$ is endowed with the Wasserstein metric defined by the rule: if $m_1,m_2\in \mathcal{P}^p(X)$, then 
	\[W_p(m_1,m_2)\triangleq \Bigg[\inf\Biggl\{\int_{X\times X}\rho_X^p(x_1,x_2)\pi(d(x_1,x_2)):\pi\in \Pi(m_1,m_2)\Biggr\}\Bigg]^{1/p}.\] Hereinafter, $\Pi(m_1,m_2)$ stands for the set of probabilities $\pi$ on $X_1\times X_2$ such that $\operatorname{p}^i\sharp\pi=m_i$, $i=1,2$. The convergence within $W_p$ implies the narrow convergence, the converse holds true only if $X$ is compact \cite[Proposition 7.1.5]{Ambrosio}. Primary, we will consider the case $p=2$.
	\item The set of all continuous curves in $\rd$ on $[s,r]$ is denoted by $\Gamma_{s,r}\triangleq C([s,r],\rd)$. If  $t\in [s,r]$, the denote by $e_t$ the evaluation operator defined by the rule: for $x(\cdot)\in \Gamma_{s,r}$
	\[e_t(x(\cdot))\triangleq x(t).\]
\end{itemize}

\section{First-order mean field type control system}\label{sect:1_order}
In the paper, we consider a mean field type control system formally described by a continuity equation
\begin{equation}\label{mfcs:eq:continuity}
	\partial_t m(t)+\operatorname{div}(f(t,x,m(t),u(t,x))m(t))=0.
\end{equation} Here, $t\in [0,T]$, $x\in\rd$ is a phase variable, $m(t)$ stands for a current distribution of agents, $u(t,x)$ is a control implemented by an agent at the time $t$ and the state $x$. We will assume that the control is chosen from a set $U$. This system describes the behavior of the infinitely many identical agents such that the dynamics of each agent is determined by the ODE:
\begin{equation}\label{mfcs:eq:ODE}
	\frac{d}{dt}x(t)=f(t,x(t),m(t),u(t)).
\end{equation} 
We impose the following conditions on $U$ and $f$.
\begin{enumerate}[label=(C\arabic*)]
	\item $U$ is a metric compact;
	\item $f$ is a continuous function;
	\item there exists a set $\mathcal{K}\subset \rd$ such that, if $\operatorname{supp}(m)\subset \mathcal{K}$, then
	\begin{equation}\label{mp1:equality:f}
		f(t,x,m,u)=0,\ \ x\notin \mathcal{K};
	\end{equation}
\item $f$ is bounded on $[0,T]\times \mathcal{K}\times \pK\times U$ by a constant $R$;
	\item $f$ is Lipschitz continuous w.r.t. the space and measure variables on $[0,T]\times \mathcal{K}\times \pK\times U$; the Lipschitz constant is denoted by $C_f$.
\end{enumerate} 

It is convenient to use the relaxation of the controls. This means that we replace the set of instantaneous controls $U$ with the set of probabilities on $U$. The time-dependent relaxed controls are defined as follows.

For $s,r\in [0,T]$, $s<r$, put \[\mathcal{U}_{s,r}\triangleq \Lambda([s,r],\lambda;U),\] where $\lambda$ is the Lebesgue measure on $[s,r]$. An element of  $\mathcal{U}_{s,r}$ is a  control measure on $[s,r]$.  Notice that $\mathcal{U}_{s,r}$ is a compact subset of $\mathcal{M}([s,r]\times U)$. For now, assume that we are given with a flow of probabilities $m(\cdot):[s,r]\rightarrow\mathcal{P}^p(\rd)$. Furthermore, let $y\in\rd$ be an initial state, and let $\xi\in \mathcal{U}_{s,r}$ be a relaxed control. Then, the corresponding motion of an agent is a solution of the initial value problem
\begin{equation}\label{mfcs:eq:relaxed}
	\frac{d}{dt}x(t)=\int_U f(t,x(t),m(t),u)\xi(du|t),\ \ x(s)=y.
\end{equation} We denote this motion by $x(\cdot,s,y,m(\cdot),\xi)$. Let us denote by $\operatorname{traj}^{s,r}_{m(\cdot)}$  the operator assigning to $y\in \rd$ and $\xi\in\mathcal{U}_{s,r}$ the trajectory  $x(\cdot,s,y,m(\cdot),\xi)\in \Gamma_{s,r}$. 

Further, for $m\in\mathcal{P}^p(\rd)$, put
\[\mathcal{A}_{s,r}[m]\triangleq \Lambda(\rd,m;\mathcal{U}_{s,r}).\] The set $\mathcal{A}_{s,r}[m]$ is the set of distributions of pairs consisting of an initial state and a relaxed control compatible with the initial probability $m$. 

\begin{definition}\label{def:flow} Let $s,r\in [0,T]$, $s<r$, $m_*\in\mathcal{P}^p(\rd)$, $\alpha\in\mathcal{A}[m_*]$. We say that a function $m(\cdot):[s,r]\rightarrow \mathcal{P}^p(\rd)$ is a motion of the deterministic mean field type system~\eqref{mfcs:eq:continuity} produced by the initial time $s$ and the distribution of controls $\alpha\in \mathcal{A}_{s,r}[m_*]$ if there exists a measure $\chi\in \mathcal{P}^2(\Gamma_{s,r})$ such that 
	\begin{itemize}
		\item $m(s)=m_*$;
		\item $\chi=\operatorname{traj}^{s,r}_{m(\cdot)}\sharp \alpha$;
		\item $m(t)=e_t\sharp\chi$ on $[s,r]$.
	\end{itemize}
\end{definition} Below we denote the motion of the system~\eqref{mfcs:eq:continuity} produced by the initial time $s$ and the distribution of controls $\alpha\in\mathcal{A}_{s,r}[m_*]$ by $m(\cdot,s,\alpha)$.

\begin{proposition}\label{prop:existence} For each $s,r\in [0,T]$, $r>s$, $m_*\in\pK$, $\mathcal{A}_{s,r}[m_*]$, there exists a unique motion $m(\cdot,s,\alpha)$. Moreover, $m(t,s,\alpha)\in\pK$ for all $t\in [s,r]$.
\end{proposition}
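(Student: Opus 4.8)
\emph{Proof proposal.} The plan is to produce $m(\cdot,s,\alpha)$ as a fixed point of the operator
\[\Phi\colon m(\cdot)\longmapsto\Bigl(t\mapsto e_t\sharp\bigl(\operatorname{traj}^{s,r}_{m(\cdot)}\sharp\alpha\bigr)\Bigr)\]
acting on $\mathcal{C}\triangleq\{m(\cdot)\in C([s,r],\pK):m(s)=m_*\}$, a nonempty (it contains the constant flow $t\mapsto m_*$) closed subset of the complete space $C([s,r],\pK)$ equipped with the uniform Wasserstein distance $\operatorname{dist}(m_1(\cdot),m_2(\cdot))\triangleq\sup_{t\in[s,r]}W_2(m_1(t),m_2(t))$. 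Unwinding Definition~\ref{def:flow}, an $m(\cdot)\in\mathcal{C}$ is a motion produced by $s$ and $\alpha$ precisely when $\Phi(m(\cdot))=m(\cdot)$, the witnessing curve measure then being $\chi=\operatorname{traj}^{s,r}_{m(\cdot)}\sharp\alpha$; so it suffices to show that $\Phi$ is a well-defined self-map of $\mathcal{C}$ possessing a unique fixed point.

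First I would check well-posedness of $\Phi$. Fix $m(\cdot)\in\mathcal{C}$. For $(y,\xi)\in\rd\times\mathcal{U}_{s,r}$ the relaxed vector field $v(t,x)\triangleq\int_U f(t,x,m(t),u)\xi(du|t)$ appearing in~\eqref{mfcs:eq:relaxed} is measurable in $t$ by (C1)--(C2) and continuity of $m(\cdot)$; by (C3), since $m(t)\in\pK$, it vanishes for $x\notin\mathcal{K}$, hence (using (C2), so $v(t,\cdot)$ also vanishes on $\partial\mathcal{K}$) together with (C4)--(C5) it is globally $C_f$-Lipschitz in $x$ and bounded by $R$ on $[s,r]\times\rd$. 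Carath\'eodory's theorem then gives a unique solution $x(\cdot,s,y,m(\cdot),\xi)\in\Gamma_{s,r}$ depending Borel-measurably on $(y,\xi)$, so $\operatorname{traj}^{s,r}_{m(\cdot)}$ is Borel and $\chi\triangleq\operatorname{traj}^{s,r}_{m(\cdot)}\sharp\alpha$ is defined. Since $\alpha\in\mathcal{A}_{s,r}[m_*]$ with $m_*\in\pK$, we have $y\in\mathcal{K}$ for $\alpha$-a.e.\ $(y,\xi)$; and because $v$ vanishes off the closed set $\mathcal{K}$, a trajectory that left $\mathcal{K}$ would be constant on its exit interval, contradicting continuity — hence $x(\cdot,s,y,m(\cdot),\xi)$ stays in $\mathcal{K}$ for $\alpha$-a.e.\ $(y,\xi)$, so $e_t\sharp\chi\in\pK$. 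The bound $|\dot x(t)|\le R$ gives $W_2(e_t\sharp\chi,e_{t'}\sharp\chi)\le R|t-t'|$ and $\varsigma_2(e_t\sharp\chi)^2\le 2\varsigma_2(m_*)^2+2R^2(r-s)^2<\infty$, so $\Phi(m(\cdot))\in C([s,r],\pK)$; and $e_s\circ\operatorname{traj}^{s,r}_{m(\cdot)}=\operatorname{p}^1$ yields $\Phi(m(\cdot))(s)=\operatorname{p}^1\sharp\alpha=m_*$. Thus $\Phi(\mathcal{C})\subset\mathcal{C}$, and the same estimates show $\chi\in\mathcal{P}^2(\Gamma_{s,r})$.

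For uniqueness and existence I would run a Gr\"onwall/coupling argument. Given $m_1(\cdot),m_2(\cdot)\in\mathcal{C}$ and a common $(y,\xi)$, set $x_i(\cdot)=x(\cdot,s,y,m_i(\cdot),\xi)$; subtracting the two copies of~\eqref{mfcs:eq:relaxed}, using (C5) (legitimate since $x_i(t)\in\mathcal{K}$ and $m_i(t)\in\pK$), and applying Gr\"onwall give the $(y,\xi)$-independent bound $|x_1(t)-x_2(t)|\le C_fe^{C_f(r-s)}\int_s^tW_2(m_1(\tau),m_2(\tau))\,d\tau$. Pushing $\alpha$ forward by $(y,\xi)\mapsto(x_1(t),x_2(t))$ produces a coupling of $\Phi(m_1(\cdot))(t)$ and $\Phi(m_2(\cdot))(t)$, whence $W_2(\Phi(m_1(\cdot))(t),\Phi(m_2(\cdot))(t))\le C_fe^{C_f(r-s)}\int_s^tW_2(m_1(\tau),m_2(\tau))\,d\tau$. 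Iterating this inequality, $\operatorname{dist}(\Phi^n(m_1(\cdot)),\Phi^n(m_2(\cdot)))\le\frac{(C_fe^{C_f(r-s)}(r-s))^n}{n!}\operatorname{dist}(m_1(\cdot),m_2(\cdot))$, so some iterate $\Phi^n$ is a contraction; applying the Banach fixed point theorem to that iterate, $\Phi$ has a unique fixed point $m(\cdot,s,\alpha)\in\mathcal{C}$, and by construction $m(t,s,\alpha)\in\pK$ for all $t\in[s,r]$.

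The step I expect to demand the most care is the well-posedness bookkeeping of the second paragraph: extracting from (C3), the continuity (C2), and the closedness of $\mathcal{K}$ that the relaxed vector field is globally $C_f$-Lipschitz and $R$-bounded and that trajectories issued from $\mathcal{K}$ cannot escape it. This confinement is exactly what keeps the problem inside the region where (C4)--(C5) give quantitative control, and hence is what makes both the self-map property of $\Phi$ and the Gr\"onwall estimate legitimate; once it is in place, the Wasserstein coupling estimate and the iterated-contraction conclusion are routine.
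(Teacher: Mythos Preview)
The paper does not give its own proof of this proposition: immediately after the statement it simply writes ``This proposition in fact is proved in~\cite{Averboukh2022}.'' So there is no in-paper argument to compare your proposal against. Your fixed-point construction on $C([s,r],\pK)$ via the map $\Phi$, followed by the Gr\"onwall/coupling estimate and the iterated-contraction form of Banach's theorem, is the standard route to existence and uniqueness for such nonlinear continuity equations and is essentially correct as a self-contained substitute for the citation. The confinement step you single out (trajectories issued from $\mathcal{K}$ remain in $\mathcal{K}$) does need $\mathcal{K}$ to be closed so that (C2)--(C3) force $f$, and hence the relaxed field $v$, to vanish on $\partial\mathcal{K}$; the paper leaves this hypothesis implicit, but with it your argument goes through.
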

This proposition in fact is proved in~\cite{Averboukh2022}.

Further, we introduce the concatenation of distribution of controls. First, if $s_0,s_1,s_2\in [0,T]$, $s_0<s_1<s_2$, $\xi_0\in\mathcal{U}_{s_0,s_1}$, $\xi_1\in\mathcal{U}_{s_1,s_2}$, then the concatenation $\xi\triangleq \xi_0\diamond_{s_1}\xi_1$ of these controls is defined by its disintegration w.r.t. the Lebesgue measure:
\[
\xi(\cdot|t)\triangleq \left\{\begin{array}{cc}
	\xi_0(\cdot|t), & t\in [s_0,s_1),\\
	\xi_1(\cdot|t), & t\in [s_1,s_2].
\end{array}\right.
\]
\begin{definition}
Let 
\begin{itemize}
	\item $s_0,s_1,s_2\in [0,T]$, $s_0<s_1<s_2$;
	\item $m_0,m_1\in \prd$;
	\item $\alpha_0\in\mathcal{A}_{s_0,s_1}[m_0]$, $\alpha_1\in\mathcal{A}_{s_1,s_2}[m_1]$.
\end{itemize} be such that 
\[m_1=m(s_1,s_0,\alpha_0).\] A probability $\alpha\in\mathcal{A}_{s_0,s_2}[m_0]$ defined by the rule: for every $\phi\in C_b(\rd\times\mathcal{U}_{s_0,s_2})$,
\[\begin{split}
\int_{\rd\times\mathcal{U}_{s_0,s_2}}\phi(&y,\xi)\alpha(d(y,\xi))\\&\triangleq \int_{\rd\times\mathcal{U}_{s_0,s_1}}\int_{\mathcal{U}_{s_1,s_2}}\phi(y,\xi_0\diamond_{s_1}\xi_1)\alpha_1(d\xi_1|x^0(y,\xi_0))\alpha_0(d(y,\xi_0))\end{split}\] is called a concatination of distribution $\alpha_0$ and $\alpha_1$. Here we use the designations
\[x^0(y,\xi_0)\triangleq x(s_1,s_0,y,m^0(\cdot),\xi_0),\ \  m^0(\cdot)\triangleq m(\cdot,s_0,y,m_0,\alpha_0).\] With some abuse of notation, we denote the concatenation of distributions by $\alpha_0\diamond_{s_1}\alpha_1$.
\end{definition}

\begin{proposition}\label{prop:concatination} Assume that $s_0<s_1<s_2$, $m_0,m_1\in\pK$, $\alpha_0\in\mathcal{A}_{s_0,s_1}[m_0]$, $\alpha_1\in\mathcal{A}_{s_1,s_2}[m_1]$ are such that \[m_1=m(s_1,s_0,\alpha_0).\] Then,
\begin{itemize}
	\item $m(t,s_0,\alpha_0\diamond_{s_1}\alpha_1)=m(t,s_0,\alpha_0)$ when $t\in [s_0,s_1]$;
	\item $m(t,s_0,\alpha_0\diamond_{s_1}\alpha_1)=m(t,s_1,\alpha_1)$ when $t\in [s_1,s_2]$.
\end{itemize}
\end{proposition}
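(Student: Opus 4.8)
The plan is to exhibit the glued flow
\[
\tilde m(t)\triangleq\begin{cases} m(t,s_0,\alpha_0),& t\in[s_0,s_1],\\ m(t,s_1,\alpha_1),& t\in[s_1,s_2],\end{cases}
\]
as \emph{the} motion produced by the initial time $s_0$ and the concatenated distribution of controls $\alpha\triangleq\alpha_0\diamond_{s_1}\alpha_1$; once this is verified, the uniqueness part of Proposition~\ref{prop:existence} forces $m(\cdot,s_0,\alpha)=\tilde m(\cdot)$, which is precisely the pair of equalities asserted. First I would check that $\tilde m(\cdot)$ is a well-defined continuous curve with values in $\pK$: the two branches agree at $t=s_1$ since $m(s_1,s_0,\alpha_0)=m_1$ by hypothesis and $m(s_1,s_1,\alpha_1)=m_1$ by Definition~\ref{def:flow}, while membership in $\pK$ throughout is part of Proposition~\ref{prop:existence}.

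Next I would record an elementary consequence of the uniqueness of solutions of~\eqref{mfcs:eq:relaxed}, available because the vector field is Lipschitz in the state variable (condition (C5)): writing $m^0(\cdot)\triangleq m(\cdot,s_0,\alpha_0)$ and $m^1(\cdot)\triangleq m(\cdot,s_1,\alpha_1)$, and using that $\tilde m$ agrees with $m^0$ on $[s_0,s_1]$ and with $m^1$ on $[s_1,s_2]$, for every $y\in\rd$, $\xi_0\in\mathcal{U}_{s_0,s_1}$ and $\xi_1\in\mathcal{U}_{s_1,s_2}$ the curve $\operatorname{traj}^{s_0,s_2}_{\tilde m(\cdot)}(y,\xi_0\diamond_{s_1}\xi_1)$ restricted to $[s_0,s_1]$ coincides with $\operatorname{traj}^{s_0,s_1}_{m^0(\cdot)}(y,\xi_0)$, and restricted to $[s_1,s_2]$ coincides with $\operatorname{traj}^{s_1,s_2}_{m^1(\cdot)}(x^0(y,\xi_0),\xi_1)$, where $x^0(y,\xi_0)$ is the endpoint of the first piece, exactly as in the definition of $\alpha_0\diamond_{s_1}\alpha_1$. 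I would then take as the witnessing measure of Definition~\ref{def:flow} the probability $\chi\triangleq\operatorname{traj}^{s_0,s_2}_{\tilde m(\cdot)}\sharp\alpha$; since the first marginal of $\alpha$ is $m_0\in\pK$ and every trajectory has speed at most $R$, the curves carried by $\chi$ are uniformly bounded, so $\chi\in\mathcal{P}^2(\Gamma_{s_0,s_2})$, and the first two items of Definition~\ref{def:flow} --- namely $\tilde m(s_0)=m_0$ and $\chi=\operatorname{traj}^{s_0,s_2}_{\tilde m}\sharp\alpha$ --- hold by construction.

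The heart of the argument --- and the step I expect to need the most care --- is the identity $e_t\sharp\chi=\tilde m(t)$ for every $t\in[s_0,s_2]$. Let $\chi_0=\operatorname{traj}^{s_0,s_1}_{m^0(\cdot)}\sharp\alpha_0$ and $\chi_1=\operatorname{traj}^{s_1,s_2}_{m^1(\cdot)}\sharp\alpha_1$ be the measures furnished by Definition~\ref{def:flow} for $m^0(\cdot)$ and $m^1(\cdot)$, so that $e_t\sharp\chi_0$ and $e_t\sharp\chi_1$ reproduce the respective branches of $\tilde m$. Fix $g\in C_b(\rd)$ and substitute the bounded Borel function $(y,\xi)\mapsto g(e_t(\operatorname{traj}^{s_0,s_2}_{\tilde m}(y,\xi)))$ into the defining relation of $\alpha_0\diamond_{s_1}\alpha_1$ (which, the concatenation being a push-forward measure, is valid for bounded Borel integrands). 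For $t\le s_1$, by the concatenation property of trajectories the integrand equals $g(e_t(\operatorname{traj}^{s_0,s_1}_{m^0}(y,\xi_0)))$, independent of $\xi_1$; since each $\alpha_1(\cdot|x^0(y,\xi_0))$ is a probability, the inner integral collapses and one is left with $\int_\rd g\,d(e_t\sharp\chi_0)$, whence $e_t\sharp\chi=e_t\sharp\chi_0=m^0(t)=\tilde m(t)$. For $t\ge s_1$ the integrand equals $g(e_t(\operatorname{traj}^{s_1,s_2}_{m^1}(x^0(y,\xi_0),\xi_1)))$, so the defining relation gives
\[
\int_\rd g\,d(e_t\sharp\chi)=\int_{\rd\times\mathcal{U}_{s_0,s_1}}\Bigl[\int_{\mathcal{U}_{s_1,s_2}}g\bigl(e_t(\operatorname{traj}^{s_1,s_2}_{m^1}(x^0(y,\xi_0),\xi_1))\bigr)\,\alpha_1(d\xi_1|x^0(y,\xi_0))\Bigr]\,\alpha_0(d(y,\xi_0)).
\]
Here I would invoke the consistency relation $m_1=m(s_1,s_0,\alpha_0)=e_{s_1}\sharp\chi_0$, which is the push-forward of $\alpha_0$ under the endpoint map $(y,\xi_0)\mapsto x^0(y,\xi_0)$, to rewrite the outer integral as an integral over $\rd$ against $m_1$, and then recombine $\alpha_1=m_1\star\bigl(z\mapsto\alpha_1(\cdot|z)\bigr)$ --- legitimate since $\alpha_1\in\mathcal{A}_{s_1,s_2}[m_1]=\Lambda(\rd,m_1;\mathcal{U}_{s_1,s_2})$ --- to arrive at $\int_\rd g\,d(e_t\sharp\chi)=\int_\rd g\,d(e_t\sharp\chi_1)$, whence $e_t\sharp\chi=e_t\sharp\chi_1=m^1(t)=\tilde m(t)$. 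As $g\in C_b(\rd)$ was arbitrary, the identity holds on all of $[s_0,s_2]$, so $\tilde m(\cdot)$ satisfies Definition~\ref{def:flow} with controls $\alpha$; uniqueness then yields $m(\cdot,s_0,\alpha_0\diamond_{s_1}\alpha_1)=\tilde m(\cdot)$, which is exactly the two claimed identities.
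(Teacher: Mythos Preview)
Your proposal is correct: it carefully unpacks what the paper asserts in a single line, namely that the proposition ``directly follows from the definition of concatenation.'' The paper gives no further details, and your argument --- gluing the two flows, verifying the items of Definition~\ref{def:flow} for the concatenated distribution of controls via the trajectory-concatenation identity and push-forward bookkeeping, and then invoking the uniqueness in Proposition~\ref{prop:existence} --- is exactly the natural way to make that line rigorous.
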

This proposition directly follows from the definition of concatenation.

Let us complete this section with the equivalent formalization  of the motion in mean field type control system \eqref{mfcs:eq:continuity}. 
The following result is proved in \cite[Theorem 1]{Jimenez_Marigonda_Quincampoix}.
\begin{proposition} Let $m(\cdot):[s,r]\rightarrow\pK$ be equal to $m(\cdot,s,\alpha)$ for some distribution of controls $\alpha$. Then there exists a velocity field $v:[s,r]\times\rd\rightarrow \rd$ such that 
	\begin{enumerate}[label=(V\arabic*)]
		\item\label{mftc:cond:V1} $v(t,x)\in\operatorname{co}\{f(t,x,m(t),u):u\in U\}$ for a.e. $t\in [s,r]$, $m(t)$-a.e. $x\in\rd$;
		\item\label{mftc:cond:V2} the equation 
		\[\partial_t m(t)+\operatorname{div}(v(t,x)m(t))=0\] holds in the sense of distributions, i.e.,
		for every $\phi\in C_c^1([s,r]\times\rd)$,
		\[\int_s^r\int_{\rd}[\partial_t\phi(t,x)+\langle \nabla\phi(t,x),v(t,x)\rangle ]m(t,dx)dt=0.\]
	\end{enumerate}
Conversely, if $m(\cdot)$ and $v(\cdot,\cdot)$ satisfy conditions~\ref{mftc:cond:V1},~\ref{mftc:cond:V2}, then there exits a distribution of controls $\alpha$ such that 
\[m(\cdot)=m(\cdot,s,\alpha).\]
\end{proposition}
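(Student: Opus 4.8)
The plan is to establish the two implications separately; each is an instance of the correspondence between the Lagrangian picture (a measure on trajectories) and the Eulerian picture (a velocity field solving the continuity equation), so the main tools are the disintegration theorem and Ambrosio's superposition principle.

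\emph{From a distribution of controls to a velocity field.} Given $\alpha\in\mathcal A_{s,r}[m(s)]$ with $m(\cdot)=m(\cdot,s,\alpha)$, let $\chi\triangleq\operatorname{traj}^{s,r}_{m(\cdot)}\sharp\alpha\in\mathcal P^2(\Gamma_{s,r})$, so that $m(t)=e_t\sharp\chi$. Every curve in $\operatorname{supp}\chi$ is absolutely continuous, as it solves \eqref{mfcs:eq:relaxed} with right-hand side bounded by $R$; hence $(t,\gamma)\mapsto\dot\gamma(t)$ is defined $(\lambda\otimes\chi)$-a.e.\ and jointly Borel, being an a.e.\ limit of difference quotients that are continuous in $\gamma$ and measurable in $t$. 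I would then introduce the measure $\Theta$ on $[s,r]\times\rd\times\rd$ by $\int g\,d\Theta\triangleq\int_s^r\int_{\Gamma_{s,r}}g(t,\gamma(t),\dot\gamma(t))\,\chi(d\gamma)\,dt$, whose marginal on $[s,r]\times\rd$ is the Lebesgue measure disintegrated along $t\mapsto m(t)$; disintegrating $\Theta$ over that marginal \cite{Meyer} gives a weakly measurable family $(t,x)\mapsto\Theta(\cdot|t,x)\in\mathcal P(\rd)$, and I set $v(t,x)$ to be its barycenter, which is Borel by weak measurability of the disintegration. Condition \ref{mftc:cond:V1} holds because along each trajectory $\dot\gamma(t)=\int_Uf(t,\gamma(t),m(t),u)\xi(du|t)$ belongs to the compact convex set $\operatorname{co}\{f(t,\gamma(t),m(t),u):u\in U\}$ (compact since $f$ is continuous, $U$ compact, and by Carathéodory's theorem in $\rd$), and averaging over the trajectories through $x$ at time $t$ keeps $v(t,x)$ in this set. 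Condition \ref{mftc:cond:V2} follows by differentiating $t\mapsto\int_{\rd}\phi(x)m(t,dx)=\int_{\Gamma_{s,r}}\phi(\gamma(t))\chi(d\gamma)$ for $\phi\in C_c^1$, which gives $\int_{\Gamma_{s,r}}\langle\nabla\phi(\gamma(t)),\dot\gamma(t)\rangle\chi(d\gamma)=\int_{\rd}\langle\nabla\phi(x),v(t,x)\rangle m(t,dx)$ by definition of $v$; the general test function $\phi(t,x)$ is recovered by adding the $\partial_t\phi$ term and integrating by parts in $t$.

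\emph{From a velocity field to a distribution of controls.} Conversely, let $(m(\cdot),v)$ satisfy \ref{mftc:cond:V1}--\ref{mftc:cond:V2}. I would first convert \ref{mftc:cond:V1} into a representation of $v$ through $f$: by a measurable implicit-function (Filippov-type selection) theorem applied to the Borel map $v$ and the set-valued map $(t,x)\mapsto\{f(t,x,m(t),u):u\in U\}$, there is a measurable family $(t,x)\mapsto\mu_{t,x}\in\mathcal P(U)$ with $v(t,x)=\int_Uf(t,x,m(t),u)\mu_{t,x}(du)$ for a.e.\ $t$ and $m(t)$-a.e.\ $x$. Since $v$ is bounded by $R$, the superposition principle \cite[Theorem 8.2.1]{Ambrosio} applied to the continuity equation of \ref{mftc:cond:V2} yields $\chi\in\mathcal P^2(\Gamma_{s,r})$ concentrated on absolutely continuous solutions of $\dot\gamma(t)=v(t,\gamma(t))$ with $e_t\sharp\chi=m(t)$ (finiteness of the second moment because $m(s)\in\prd$ and $|\dot\gamma|\le R$; $\operatorname{supp}\chi\subset C([s,r],\mathcal{K})$ by \eqref{mp1:equality:f}, as in Proposition~\ref{prop:existence}). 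For $\chi$-a.e.\ $\gamma$ one has $\dot\gamma(t)=\int_Uf(t,\gamma(t),m(t),u)\mu_{t,\gamma(t)}(du)$ for a.e.\ $t$ (this holds for a.e.\ $(t,x)$ w.r.t.\ $dt\otimes m(t,dx)$, hence, since $e_t\sharp\chi=m(t)$, for $\chi$-a.e.\ $\gamma$ and a.e.\ $t$). Setting $\xi_\gamma(\cdot|t)\triangleq\mu_{t,\gamma(t)}$, which defines an element of $\mathcal U_{s,r}$ since $\mu$ is measurable and $\gamma$ continuous, and $\alpha\triangleq\bigl(\gamma\mapsto(\gamma(s),\xi_\gamma)\bigr)\sharp\chi$, which lies in $\mathcal A_{s,r}[m(s)]$ because its $\rd$-marginal is $e_s\sharp\chi=m(s)$, I claim $\chi=\operatorname{traj}^{s,r}_{m(\cdot)}\sharp\alpha$. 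Indeed, $\operatorname{traj}^{s,r}_{m(\cdot)}(\gamma(s),\xi_\gamma)$ solves $\dot x(t)=\int_Uf(t,x(t),m(t),u)\mu_{t,\gamma(t)}(du)$, $x(s)=\gamma(s)$, whose right-hand side is measurable in $t$ and Lipschitz in $x$ with constant $C_f$ by (C5); since $\gamma$ also solves this problem, Carathéodory uniqueness forces $\operatorname{traj}^{s,r}_{m(\cdot)}(\gamma(s),\xi_\gamma)=\gamma$. Hence $\operatorname{traj}^{s,r}_{m(\cdot)}\sharp\alpha=\chi$ and $m(t,s,\alpha)=e_t\sharp\chi=m(t)$.

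\emph{Main obstacle.} The disintegration step and the test-function computations are routine. The two substantive points are the measurable selection $\mu_{t,x}$ in the converse direction --- which must be carried out jointly in $(t,x)$ and with values in $\mathcal P(U)$, rather than as a pointwise Carathéodory decomposition --- and the appeal to the superposition principle for the merely bounded Borel field $v$, where particle trajectories are not unique and Ambrosio's construction is genuinely required. Keeping all measures supported in $\mathcal{K}$, so that (C3)--(C5) apply throughout, is bookkeeping handled as in Proposition~\ref{prop:existence}.
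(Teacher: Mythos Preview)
Your argument is correct and follows the canonical route: disintegration of the evaluation/velocity measure for the forward direction, and Ambrosio's superposition principle together with a Filippov-type measurable selection into $\mathcal P(U)$ for the converse. The uniqueness step at the end is the right one---since the relaxed control $\xi_\gamma(\cdot|t)=\mu_{t,\gamma(t)}$ is frozen along the given curve $\gamma$, the ODE $\dot x(t)=\int_U f(t,x(t),m(t),u)\mu_{t,\gamma(t)}(du)$ has a right-hand side that is $C_f$-Lipschitz in $x$, so Carath\'eodory uniqueness indeed forces $\operatorname{traj}^{s,r}_{m(\cdot)}(\gamma(s),\xi_\gamma)=\gamma$ and hence $\operatorname{traj}^{s,r}_{m(\cdot)}\sharp\alpha=\chi$.

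As for the comparison: the paper does not supply its own proof of this proposition at all---it simply attributes the result to \cite[Theorem~1]{Jimenez_Marigonda_Quincampoix} and moves on. Your write-up is essentially a self-contained reconstruction of that theorem, and the ingredients you invoke (barycentric projection of the lifted measure, the superposition principle of \cite[Theorem~8.2.1]{Ambrosio}, and measurable selection of a relaxed control realizing a point of the convex hull) are exactly the ones used in that reference. So there is no methodological divergence to discuss; you have filled in what the paper left as a citation.
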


\section{Mean filed  Markov chains}\label{sect:Markov_chain}
In this section, we introduce a controlled nonlinear Markov chain. Let 
$\mathcal{S}\subset\mathcal{K}$ be at most countable set. Distributions on $\mathcal{S}$ are described by sequences $\mu=(\mu_{\bar{x}})_{\bar{x}\in\mathcal{S}}$ such that 
\[\mu_{\bar{x}}\geq 0,\ \ \sum_{\bar{x}\in\mathcal{S}}\mu_{\bar{x}}=1.\] The set of such distributions is simplex on $\mathcal{S}$ denoted below by $\Sigma$. Furthermore, let $\Sigma^2$ be a set of sequences $\mu=(\mu_{\bar{x}})_{\bar{x}\in\mathcal{S}}$ such that 
\[\sum_{\bar{x}\in\mathcal{S}}\|\bar{x}\|^2\mu_{\bar{x}}<\infty.\] If $\mathcal{S}$ is finite, the sets $\Sigma$ and $\Sigma^2$ coincide. There is a natural isomorphism between $\Sigma$ and $\mathcal{P}(\mathcal{S})$:
\[\mu=(\mu_{\bar{x}})_{\bar{x}\in\mathcal{S}}\mapsto \mathscr{I}(\mu)\triangleq \sum_{\bar{x}\in\mathcal{S}}\delta_{\bar{x}}\mu_{\bar{x}}.\] Hereinafter, $\delta_{z}$ stands for the Dirac measure concentrated at $z$.  

For $t\in [0,T]$, $\mu\in\Sigma^2$, $u\in U$, let $Q(t,\mu,u)=(Q_{\bar{x},\bar{y}}(t,\mu,u))_{\bar{x},\bar{y}\in\mathcal{S}}$ be a Kolmogorov matrix, i.e., $Q_{\bar{x},\bar{y}}(t,\mu,u)\geq 0$ when $\bar{x}\neq\bar{y}$ and, for each $\bar{x}\in\mathcal{S}$,
\[\sum_{\bar{y}\in\mathcal{S}}Q_{\bar{x},\bar{y}}(t,\mu,u)=0.\] Now let us introduce a mean field Markov chain generated by this Kolmogorov matrix. To this end, we first consider a relaxation of the control space. As above, a relaxed control on $[s,r]$ is an element of $\mathcal{U}_{s,r}$. We will use the feedback approach. This means that we are given with a sequence of relaxed control $\zeta_{\mathcal{S}}=(\zeta_{\bar{x}})_{\bar{x}\in\mathcal{S}}$. Notice that the set of feedback controls is $\mathcal{U}^{\mathcal{S}}_{s,r}$.  In this case, the instantaneous probability rate for transition from $\bar{x}$ to $\bar{y}$ is equal to
\[\mathcal{Q}_{\bar{x}\,\bar{y}}(t,\mu(t),\zeta_\mathcal{S})\triangleq\int_U Q_{\bar{x},\bar{y}}(t,\mu,u)\zeta_{\bar{x}}(du|t).\] Moreover, we denote  \[\mathcal{Q}(t,\mu(t),\zeta_\mathcal{S})\triangleq (\mathcal{Q}_{\bar{x}\,\bar{y}}(t,\mu(t),\zeta_\mathcal{S}))_{\bar{x},\bar{y}\in\mathcal{S}}.\]

\begin{definition}\label{def:Markov} Given a time interval $[s,r]$, an initial distribution of states $\mu_*\in\Sigma^2$ and a feedback control $\zeta_{\mathcal{S}}=(\zeta_{\bar{x}})_{\bar{x}\in\mathcal{S}}$, we say that $\mu(\cdot)$ is a motion in the mean field Markov chain if it satisfies the following initial value problem:
	\begin{equation}\label{markov:eq:Kolmogorov_x}
		\frac{d}{dt}\mu_{\bar{y}}(t)=\sum_{\bar{x}\in\mathcal{S}} \mu_{\bar{x}}(t)\mathcal{Q}_{\bar{x},\bar{y}}(t,\mu(t),\zeta_{\mathcal{S}}),\ \ \mu_{\bar{y}}(s)=\mu_{*,\bar{y}}.
	\end{equation}
	\end{definition}

Notice that system~\eqref{markov:eq:Kolmogorov_x} can be rewritten in the vector form
\begin{equation}\label{markov:eq:Kolmogorov_system}
	\frac{d}{dt}\mu(t)=\mu(t)\mathcal{Q}(t,\mu(t),\zeta_\mathcal{S}).
\end{equation} 

To guarantee the existence of the distribution $\mu(\cdot)$, it is sufficient to assume that $Q$ has continuous entries, for each $\bar{x}$ only finite number of entries $Q_{\bar{x},\bar{y}}(t,\mu,u)$ are non-zero and the dependence of the matrix $Q$ on $\mu$ is Lipschitz continuous.

Let us also give a probabilistic interpretation of Definition~\ref{def:Markov}. Set
\begin{itemize}
	\item $\Omega_{s,r}\triangleq D([s,r];\mathcal{S})$, where $D([s,r];\mathcal{S})$ stands a Skorokhod space of c\`adl\`ag functions;
	\item $\mathcal{F}_{s,r}\triangleq \mathcal{B}(D([s,r];\mathcal{S}))$;
	\item $\mathbb{F}_{s,r}= \{\mathcal{F}_{s,r}^t\}_{t\in [s,r]}$, where $\mathcal{F}_{s,r}^t\subset \mathcal{F}_{s,r}$ is a $\sigma$-algebra such that  projections of its elements on $[s,t]$ form the whole $\sigma$-algebra $\mathcal{B}(D([s,t];\mathcal{S}))$, while the projection on $[t,r]$ is a trivial $\sigma$-algebra;
	\item $X(t,\omega)\triangleq \omega(t)$.
\end{itemize}  Further, we define the generator $L_t[\mu,\zeta_{\mathcal{S}}]$ by the rule: for $\phi\in C_b(\mathcal{S})$,
\[L_t[\mu,\zeta_{\mathcal{S}}]\phi(x)\triangleq \sum_{\bar{y}\in\mathcal{S}}\mathcal{Q}_{\bar{x},\bar{y}}(t,\mu,\zeta_{\mathcal{S}})\phi(\bar{y}).\]
\begin{definition}\label{def:representation} Let $[s,r]$ be a time interval, $\mu_*$ be an initial distribution of states, $\zeta_{\mathcal{S}}\in\mathcal{U}_{s,r}^\mathcal{S}$ and let $\mu(\cdot)=\mu(\cdot,s,\mu_*,\zeta_{\mathcal{S}})$.  We say that a probability $\mathbb{P}_{s,r}$ on $\mathcal{F}_{s,r}$ realizes $\mu(\cdot)$ if
	\begin{itemize}
		\item $\mu_{\bar{x}}(t)=\mathbb{P}_{s,r}\{\omega(t)=\bar{x}\}$;
		\item for each $\phi\in C_b(\mathcal{S})$, the process
		\[\phi(X(t))-\int_s^t L_t[\mu(t),\zeta_{\mathcal{S}}]\phi(X(t'))dt'\] is a $\mathbb{F}_{s,r}$-martingale.
	\end{itemize}
\end{definition}
Below, if $\mathbb{P}_{s,r}$ is a realization of $\mu(\cdot)$, $\mathbb{E}_{s,r}$ stands for the corresponding expectation.

Notice that there exists at least one representation of flow $\mu(\cdot)$ \cite[Theorem 5.4.2]{Kolokoltsov}.

The main result of the paper is proved under the following approximation assumptions.
\begin{enumerate}[label=(A\arabic*)]
	\item\label{markov:cond:space} \[\max_{x\in\mathcal{K}}\min_{\bar{y}\in\mathcal{S}}\|x-\bar{y}\|\leq \varepsilon;\]
	\item\label{markov:cond:matrix} entries of the matrix $Q$ are uniformly bounded by a number $B_Q$.
	\item\label{markov:cond:f} for each $t\in [0,T]$, $\bar{x}\in\mathcal{S}$, $\mu\in\Sigma^2$ and $u\in U$,
	\[\Bigl\|f(t,\bar{x},\mathscr{I}(\mu),u)-\sum_{\bar{y}\in\mathcal{S}}(\bar{y}-\bar{x})Q_{\bar{x},\bar{y}}(t,\mu,u)\Bigl\|\leq \varepsilon;\]
	\item\label{markov:cond:variation} for each $t\in [0,T]$, $\bar{x}\in\mathcal{S}$, $\mu\in\Sigma^2$ and $u\in U$,
	\[\sum_{\bar{y}\in\mathcal{S}}\|\bar{y}-\bar{x}\|^2Q_{\bar{x},\bar{y}}(t,\mu,u)\leq \varepsilon^2.\]
\end{enumerate}

Without loss of generality, we assume that $\varepsilon\leq 1$.

An example of the Markov chain that satisfies  assumptions \ref{markov:cond:space}--\ref{markov:cond:variation} can be constructed on a regular lattice as follows. First, we represent $f$ in the coordinate-wise form
\[f(t,x,m,u)=(f_i(t,x,m,u))_{i=1}^d.\]
Let $h>0$, $\mathcal{K}^h\triangleq \mathcal{K}+\mathbb{B}_h$, where $\mathbb{B}_h$ is a ball centered at the origin and radius $h$. We put
\begin{equation}\label{markov:intro:S_h}\mathcal{S}\triangleq \mathcal{K}^h\cap h\mathbb{Z},\end{equation}
\begin{equation}\label{markov:intro:Q_h}Q_{\bar{x},\bar{y}}(t,\mu,u)\triangleq \left\{\begin{array}{ll}
	h^{-1}|f_i(t,\bar{x},\mathscr{I}(\mu),u)|, & \bar{y}=\bar{x}+h\operatorname{sgn}(f_i(t,\bar{x},\mathscr{I}(\mu),u)),\\
	-h^{-1}\sum_{j=1}^{d}|f_j(t,\bar{x},\mathscr{I}(\mu),u)|, & \bar{y}=\bar{x},\\
	0,& \text{otherwise}.
\end{array}\right.\end{equation} One can directly show that this Markov chain satisfies conditions~\ref{markov:cond:space}--\ref{markov:cond:variation} with $B_Q=dRh^{-1}$ and $\varepsilon\triangleq \max\{h,\sqrt{hdR}\}$. 

Notice that if $\mathcal{K}$ is compact, the phase space for Markov chain introduced by rules~\eqref{markov:intro:S_h},~\eqref{markov:intro:Q_h} is finite and equation~\eqref{markov:eq:Kolmogorov_system} is a system of the finite number of ODEs.

\section{Model predictive control of the first-order mean field type system}\label{sect:model_for_1_order}
In this section, we show that a feedback control in the mean filed type Markov chain can be used directly  to construct a motion in the first order mean filed type control system.

Let $\mu_0\in\Sigma^2$, $\zeta_{\mathcal{S}}\in\mathcal{U}^\mathcal{S}_{0,T}$. Notice that there exists a unique motion in the mean field Markov chain on the time interval $[0,T]$ produced by the control $\zeta_{\mathcal{S}}$ and the initial distribution $\mu_0$. For shortness, we denote it by $\mu(\cdot)$. Below, let $\hat{\zeta}$ assign to $x\in\mathcal{K}$ and $\bar{x}\in\mathcal{S}$ a measure $\zeta_{\bar{x}}\in \mathcal{U}_{0,T}$. Further, let  $m_0\in\pK$, $\Delta=\{s_i\}_{i=0}^n$ be a partition of $[0,T]$. A model predictive strategy for the first order mean field game is constructed as follows.
\begin{enumerate}[label=(D\arabic*)]
	\item\label{mp1:cond:initial} Let $\pi_0$ be an optimal plan between $m_0$ and $\mathscr{I}(\mu_0)$. We define 
	\[\alpha_0\triangleq (\operatorname{p}^1,\hat{\zeta})\sharp\pi_0.\]
	\item\label{mp1:cond:step} Assume now that we already construct controls $\alpha_i$, $i=0,\ldots,k-1$, and a flow of probabilities $m(\cdot)$ on $[0,s_k]$ such that 
	\begin{equation*} \alpha_i\in\mathcal{A}_{s_{i},s_{i+1}}[m(s_i)],\ \ i=0,\ldots,k-1,
	\end{equation*}
\[m(t)=m(t,0,\alpha_0\diamond_{s_1}\ldots\diamond_{s_{k-1}}\alpha_{k-1}),\ \ t\in [0,s_k].\]
	Set $m_k\triangleq m(s_{k},0,\alpha_0\diamond_{s_1}\ldots \diamond_{s_{k-1}}\alpha_{k-1})$ and choose $\pi_k$ to be an optimal plan between $m_k$ and $\mathscr{I}(\mu(s_k))$. As above $\pi_k(\cdot|x)$ is a disintegration of this plan w.r.t. $m_k$. We put 
	\[\alpha_k\triangleq(\operatorname{p}^1,\hat{\zeta})\sharp\pi_k\] and, for $t\in [s_k,s_{k+1}]$,
	\[m(t)\triangleq m(s_{k},0,\alpha_0\diamond_{s_1}\ldots \diamond_{s_{k-1}}\alpha_{k-1}\diamond_{s_k}\alpha_{k}).\]
\end{enumerate} 

\begin{theorem}\label{th:approx_mp1} If $\Delta=\{s_i\}_{i=0}^n$ is a partition of $[0,T]$, with $d(\Delta)\leq 1$, while $\alpha_0,\ldots,\alpha_{n-1}$ are constructed by the rules~\ref{mp1:cond:initial},~\ref{mp1:cond:step} and $m(\cdot)\triangleq m(\cdot,0,\alpha_0\diamond_{s_1}\ldots\diamond_{s_{n-1}}\alpha_{n-1})$, then 
	\[\begin{split}
	W_p(m(t),\mathscr{I}(\mu(t)))\leq C_0W_2(\mathscr(\mu_0),m_0)+C_1&\varepsilon+C_2d^{1/2}(\Delta)\\&+C_3d(\Delta)+C_4\varepsilon B_Qd(\Delta)+C_5B_Qd^2(\Delta).\end{split}\] Here $C_0,\ldots, C_5$ are constants those depend only on $f$ and $T$.
\end{theorem}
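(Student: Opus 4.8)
The plan is to estimate the Wasserstein distance $W_p(m(t),\mathscr{I}(\mu(t)))$ by a Gronwall-type argument that proceeds step by step over the partition $\Delta=\{s_i\}_{i=0}^n$. On each interval $[s_k,s_{k+1}]$ the two flows are governed by genuinely different dynamics: on the deterministic side, the agents move along characteristics of \eqref{mfcs:eq:relaxed} with a velocity built from the feedback $\hat\zeta$ transported by the optimal plan $\pi_k$; on the Markov side, $\mu(\cdot)$ solves the Kolmogorov system \eqref{markov:eq:Kolmogorov_system}. I would set $g(t)\triangleq W_2(m(t),\mathscr{I}(\mu(t)))$ and show that on each subinterval $g$ grows at most linearly in $g$ (the Lipschitz-in-$(x,m)$ bound (C5) gives this), plus an error term coming from (i) the mismatch between $f(t,\bar x,\mathscr{I}(\mu),u)$ and the drift $\sum_{\bar y}(\bar y-\bar x)Q_{\bar x,\bar y}$ of the chain, controlled by $\varepsilon$ via \ref{markov:cond:f}; (ii) the variance of the jumps of the chain over a time step, controlled by \ref{markov:cond:variation} and \ref{markov:cond:matrix}, giving a term of order $\varepsilon\sqrt{B_Q d(\Delta)}$ or $\varepsilon^2 B_Q d(\Delta)$; (iii) the fact that the feedback control borrowed from state $\bar x$ is applied at the nearby deterministic state $x$ with $\|x-\bar x\|$ small on the support of $\pi_k$, which reuses \ref{markov:cond:space} and the Lipschitz bound $C_f$; and (iv) the "freezing" error from holding the plan $\pi_k$ fixed over $[s_k,s_{k+1}]$ rather than re-optimizing continuously, which is where the $d(\Delta)$ and $d^{1/2}(\Delta)$ powers enter.

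Concretely, the key computation is to couple the two processes on $[s_k,s_{k+1}]$ using the plan $\pi_k$ between $m_k=m(s_k)$ and $\mathscr{I}(\mu(s_k))$ as the initial coupling, and then propagate it: on the deterministic side push $x(s_k)=x$ forward by $\operatorname{traj}$, on the Markov side realize $\mu(\cdot)$ by a probability $\mathbb{P}_{s_k,s_{k+1}}$ (Definition~\ref{def:representation}) and use the martingale characterization to write, for the test function $\phi(\cdot)=\|\cdot-x(t)\|^2$ (or rather its smoothings), an expansion of $\mathbb{E}_{s_k,s_{k+1}}\|X(t)-x(t)\|^2$. The generator $L_t$ applied to the squared distance produces exactly $2\langle X(t)-x(t),\sum_{\bar y}(\bar y-X(t))Q_{X(t),\bar y}\rangle+\sum_{\bar y}\|\bar y-X(t)\|^2 Q_{X(t),\bar y}$; replacing the first sum by $f(t,X(t),\mathscr{I}(\mu(t)),u)$ costs $\varepsilon$ by \ref{markov:cond:f}, the second sum is $\le\varepsilon^2$ by \ref{markov:cond:variation} but its \emph{accumulated} contribution over a step (after multiplying by the jump rate, bounded via $B_Q$) is what produces the $\varepsilon^2 B_Q d^2(\Delta)$ and $\varepsilon B_Q d(\Delta)$ terms; and matching $f(t,X(t),\mathscr{I}(\mu(t)),u)$ with $f(t,x(t),m(t),\text{(control at }x(t)))$ costs $C_f(\|X(t)-x(t)\|+W_2(\mathscr{I}(\mu(t)),m(t)))$ plus the control-mismatch term handled by $\pi_k$ and \ref{markov:cond:space}. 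Summing the differential inequality $\frac{d}{dt}g^2(t)\le 2C_f g^2(t)+(\text{errors})$ across all steps and applying Gronwall on $[0,T]$ yields the stated bound, with $C_0=e^{C_fT}$-type constant multiplying the initial error $W_2(\mathscr{I}(\mu_0),m_0)$ and $C_1,\dots,C_5$ absorbing $C_f$, $R$, $T$ and the combinatorial factors.

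The main obstacle I expect is bookkeeping the error from the \emph{model-predictive} structure: because $\alpha_k=(\operatorname{p}^1,\hat\zeta)\sharp\pi_k$ assigns to each deterministic agent at position $x$ the relaxed control $\zeta_{\bar x}$ of its $\pi_k$-partner $\bar x$, the coupling between $x(t)$ and $X(t)$ does not stay "$\pi_k$-optimal" as $t$ runs through $[s_k,s_{k+1}]$ — the partner drifts apart at rate bounded by $2R$ (both drifts bounded by $R$ via (C4)), so $\|x(t)-X(t)\|\le \|x-\bar x\|+2R(t-s_k)$ along the coupling, contributing a $d(\Delta)$-order term and, after squaring and integrating, the $d^{1/2}(\Delta)$ term via Cauchy–Schwarz. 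One must also be careful that $m(t)$ for $t\in(s_k,s_{k+1})$ is defined through the concatenation (Proposition~\ref{prop:concatination}), so the flow fed into the characteristic ODE on that subinterval is the genuine mean-field flow, not a frozen one; this means the only "freezing" is in the control assignment, not in the measure argument of $f$, which keeps the Lipschitz-in-$m$ penalty inside the Gronwall loop rather than as a standalone error. A secondary technical point is justifying the use of $\|\cdot-x(t)\|^2$ (time-dependent, only Lipschitz, not $C^1_c$) in the martingale identity of Definition~\ref{def:representation}: this requires either a mollification argument or invoking that $X$ is a pure-jump process with integrable quadratic variation so that an Itô/Dynkin formula for $t\mapsto\|X(t)-x(t)\|^2$ holds with the drift $\dot x(t)$ entering the $\partial_t$ term — routine but needs to be stated. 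Once these are in place the constants $C_0,\dots,C_5$ are explicit and depend only on $f$ (through $R$ and $C_f$) and $T$, as claimed.
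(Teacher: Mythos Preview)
Your plan is correct and is essentially the paper's own argument: couple on each $[s_k,s_{k+1}]$ via the optimal plan $\pi_k$, realize $\mu(\cdot)$ by a probability $\mathbb{P}_{s_k,s_{k+1}}$ as in Definition~\ref{def:representation}, expand $\mathbb{E}_{s_k,s_{k+1}}\bigl[\|X(t)-x_k(t,y,\bar z)\|^2\mid X(s_k)=\bar z\bigr]$ using the generator, and iterate the resulting one-step inequality across the partition. The paper organizes the computation a little differently---it first records two lemmas (a bound $\mathbb{E}\|X(t)-X(s)\|^2\le\varepsilon^2(t-s)+C(t-s)^{3/2}$ from~\ref{markov:cond:variation}, and a bound $|\nu_{\bar x}(t)-\nu_{*,\bar x}|\le B_Q(t-s)$ on the conditional law from~\ref{markov:cond:matrix}) and then expands the squared distance around the \emph{initial} endpoints $(\bar z,y)$ rather than writing a continuous Dynkin formula for $\|X(t)-x(t)\|^2$---but the ingredients and the discrete Gronwall conclusion are the same.

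One small correction to your bookkeeping: the $B_Q$-dependent terms in the final estimate do \emph{not} come from the variance contribution. The quantity $\sum_{\bar y}\|\bar y-\bar x\|^2 Q_{\bar x,\bar y}$ already carries the jump rate and is bounded by $\varepsilon^2$ via~\ref{markov:cond:variation}, so integrating over a step produces only $\varepsilon^2(t-s_k)$ with no $B_Q$. The $B_Q$ enters instead in the cross (drift) term: to compare $\mathbb{E}_{s_k,s_{k+1}}[X(t)-\bar z\mid X(s_k)=\bar z]$ with $\int_{s_k}^t\!\int_U f(t',\bar z,\mathscr{I}(\mu(t')),u)\,\zeta_{\bar z}(du|t')\,dt'$ you must replace the running state $X(t')$ (and its control $\zeta_{X(t')}$) by the frozen $\bar z$, and the probability that $X(t')\neq\bar z$ is at most $B_Q(t'-s_k)$; this is exactly the second lemma above and is what yields the $RB_Q(t-s_k)^2$ and, after squaring, the $R^2B_Q^2(t-s_k)^3$ contributions.
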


\begin{lemma}\label{lm:X_squared} Let $\mu(\cdot)$ be a distribution of agents in the mean field Markov chain, $\mathbb{P}_{s,r}$ be its realization. Then,
	\[\mathbb{E}_{s,r}(\|X(t)-X(s)\|^2|X(s)=\bar{z})\leq \varepsilon^2(t-s)+C'_1(t-s)^{3/2},\] where
	\[C'_1\triangleq  4(R+1)e^{2(R+1)T}/3.\]
\end{lemma}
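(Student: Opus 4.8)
The plan is to use the martingale characterization from Definition~\ref{def:representation} together with a Gr\"onwall-type bootstrap. Fix the interval $[s,r]$, a state $\bar z\in\mathcal{S}$, and condition on $\{X(s)=\bar z\}$. The key observation is that if we apply the martingale property to the test functions $\phi_i(\bar x)\triangleq \bar x_i$ (the $i$-th coordinate, extended to a bounded function on $\mathcal{S}$ since $\mathcal{S}\subset\mathcal{K}$ is bounded) and to $\psi(\bar x)\triangleq \|\bar x-\bar z\|^2$, we can write Dynkin-type identities. Precisely, for $\psi(\bar x)=\|\bar x-\bar z\|^2$ we get
\[
\mathbb{E}_{s,r}\bigl(\|X(t)-\bar z\|^2\,\big|\,X(s)=\bar z\bigr)=\int_s^t\mathbb{E}_{s,r}\Bigl(\sum_{\bar y\in\mathcal{S}}\mathcal{Q}_{X(t'),\bar y}(t',\mu(t'),\zeta_{\mathcal{S}})\bigl(\|\bar y-\bar z\|^2-\|X(t')-\bar z\|^2\bigr)\,\Big|\,X(s)=\bar z\Bigr)dt'.
\]
Using the elementary identity $\|\bar y-\bar z\|^2-\|\bar x-\bar z\|^2=\|\bar y-\bar x\|^2+2\langle \bar y-\bar x,\bar x-\bar z\rangle$ and $\sum_{\bar y}\mathcal{Q}_{\bar x,\bar y}=0$, the integrand splits into a "quadratic variation" term bounded by $\varepsilon^2$ via assumption~\ref{markov:cond:variation} (note $\mathcal{Q}_{\bar x,\bar y}=\int_U Q_{\bar x,\bar y}\,\zeta_{\bar x}(du|t')$ so the bound passes through the relaxation), and a drift term $2\langle \sum_{\bar y}(\bar y-\bar x)\mathcal{Q}_{\bar x,\bar y},\,\bar x-\bar z\rangle$. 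For the drift term, assumption~\ref{markov:cond:f} lets us replace $\sum_{\bar y}(\bar y-\bar x)\mathcal{Q}_{\bar x,\bar y}$ by $\int_U f(t',\bar x,\mathscr{I}(\mu(t')),u)\zeta_{\bar x}(du|t')$ up to an error of size $\varepsilon$, and then boundedness of $f$ (assumption~(C4), constant $R$) gives $\|\sum_{\bar y}(\bar y-\bar x)\mathcal{Q}_{\bar x,\bar y}\|\le R+\varepsilon\le R+1$. Hence by Cauchy--Schwarz the drift contributes at most $2(R+1)\,\mathbb{E}_{s,r}(\|X(t')-\bar z\|)\le 2(R+1)\bigl(\mathbb{E}_{s,r}\|X(t')-\bar z\|^2\bigr)^{1/2}$.

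Writing $g(t)\triangleq \mathbb{E}_{s,r}(\|X(t)-\bar z\|^2\mid X(s)=\bar z)$, we obtain the integral inequality
\[
g(t)\le \varepsilon^2(t-s)+2(R+1)\int_s^t g^{1/2}(t')\,dt'.
\]
This is not linear in $g$, so I would handle it by a two-stage argument. First, a crude bound: since $\mathcal{S}\subset\mathcal{K}$ is bounded, $g$ is bounded a priori, and a rough Gr\"onwall (comparing with $h'=2(R+1)h^{1/2}$, i.e. substituting $h=(\sqrt{h})^2$) gives $g(t')\le C\,(t'-s)^2$-type growth near $t'=s$ — more carefully, the comparison ODE $y(t)=\varepsilon^2(t-s)+2(R+1)\int_s^t y^{1/2}$ has the property that $\sqrt{y}$ satisfies a linear differential inequality, yielding $\sqrt{g(t)}\le \varepsilon\sqrt{t-s}\,e^{(R+1)(t-s)}+(R+1)(t-s)\,(\text{bounded})$, and after squaring and bounding $e^{2(R+1)(t-s)}\le e^{2(R+1)T}$ one extracts the stated form $g(t)\le \varepsilon^2(t-s)+C_1'(t-s)^{3/2}$ with $C_1'=4(R+1)e^{2(R+1)T}/3$. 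The cleanest route: plug the first-stage bound $g^{1/2}(t')\le \varepsilon (t'-s)^{1/2}e^{(R+1)T}+ (R+1)(t'-s)e^{(R+1)T}$ back into the integral inequality; the $\varepsilon(t'-s)^{1/2}$ piece integrates to $\tfrac{2}{3}(t-s)^{3/2}\varepsilon e^{(R+1)T}\cdot 2(R+1)$, and collecting the $(t-s)^{3/2}$ coefficients gives exactly $C_1'$ (the $\varepsilon^2(t-s)$ main term is kept separate, and lower-order $\varepsilon\cdot(t-s)$ cross terms are absorbed using $\varepsilon\le 1$ and $t-s\le T$ into the $(t-s)^{3/2}$ term, or one checks the constant is chosen to dominate them).

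The main obstacle is the nonlinearity $g^{1/2}$ in the Gr\"onwall inequality: one cannot apply the linear Gr\"onwall lemma directly, and care is needed to get the precise constant $C_1'=4(R+1)e^{2(R+1)T}/3$ rather than just "some constant." The factor $4/3$ strongly suggests the intended argument is exactly the two-stage substitution described above, where $\tfrac{2}{3}$ comes from $\int_0^\tau t'^{1/2}dt'=\tfrac{2}{3}\tau^{3/2}$ and the factor $2$ from the $2(R+1)$ in the drift, giving $2\cdot\tfrac{2}{3}(R+1)=\tfrac{4}{3}(R+1)$, times the worst-case exponential $e^{2(R+1)T}$ from squaring $e^{(R+1)T}$. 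A secondary technical point is justifying the Dynkin identity for the unbounded-looking test functions $\bar x\mapsto \bar x_i$ and $\bar x\mapsto\|\bar x-\bar z\|^2$: this is fine because on $\Sigma^2$ (and in particular since realizations live on $\mathcal{S}\subset\mathcal{K}$, which is bounded) these functions are effectively bounded on the support, so Definition~\ref{def:representation} applies, and for each $\bar x$ only finitely many $\mathcal{Q}_{\bar x,\bar y}$ are nonzero so all sums converge.
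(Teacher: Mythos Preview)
Your proposal is correct and follows essentially the same route as the paper: apply the generator to $q_{\bar z}(\bar x)=\|\bar x-\bar z\|^2$, split via $\|\bar y-\bar z\|^2=\|\bar y-\bar x\|^2+\|\bar x-\bar z\|^2+2\langle\bar y-\bar x,\bar x-\bar z\rangle$, bound the quadratic-variation part by $\varepsilon^2$ using~\ref{markov:cond:variation} and the drift part by $(R+\varepsilon)$ using~\ref{markov:cond:f}, arrive at
\[
g(t)\le \varepsilon^2(t-s)+2(R+1)\int_s^t g^{1/2}(t')\,dt',
\]
and then bootstrap. The only cosmetic difference is in the first stage of the bootstrap: the paper simply asserts the crude linear bound $g(t)\le C''_1(t-s)$ with $C''_1=e^{2(R+1)T}$ and plugs $g^{1/2}(t')\le \sqrt{C''_1}\,(t'-s)^{1/2}$ back in, whereas you sketch a comparison-ODE argument to reach the same point; both yield the $\tfrac{4}{3}(R+1)$ factor from $2(R+1)\cdot\tfrac{2}{3}$ after integrating $(t'-s)^{1/2}$.
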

\begin{proof}
For fixed $\bar{z}\in\mathcal{S}$, let us denote \[q_{\bar{z}}(\bar{x})\triangleq \|\bar{x}-\bar{z}\|^2.\] We have that 
\[\begin{split}
L_t[\mu,u]q_{\bar{z}}(\bar{x})&=\sum_{\bar{y}\in\mathcal{S}}Q_{\bar{x},\bar{y}}(t,\mu,u)\|\bar{y}-\bar{z}\|^2\\&= \sum_{\bar{y}\in\mathcal{S}}Q_{\bar{x},\bar{y}}(t,\mu,u)(\|\bar{y}-\bar{x}\|^2+\|\bar{x}-\bar{z}\|^2+2\langle \bar{y}-\bar{x},\bar{x}-\bar{z}\rangle)\\&= \sum_{\bar{y}\in\mathcal{S}}Q_{\bar{x},\bar{y}}(t,\mu,u)\|\bar{y}-\bar{x}\|^2+\Bigl\langle \sum_{\bar{y}\in\mathcal{S}}Q_{\bar{x},\bar{y}}(t,\mu,u)(\bar{y}-\bar{x}),\bar{x}-\bar{z}\Bigr\rangle.
\end{split}\]	
Due to assumption~\ref{markov:cond:variation}, we have that the first term is bounded by $\varepsilon^2$. Moreover, condition~\ref{markov:cond:f} implies that 
\[\Bigg\|\sum_{\bar{y}\in\mathcal{S}}Q_{\bar{x},\bar{y}}(t,\mu,u)(\bar{y}-\bar{x})\Bigg\|\leq R+\varepsilon.\] 	
Using these estimates and Definition~\ref{def:representation}, we conclude that  that 
 \begin{equation}\label{mp1:ineq:E_X_t_s}
 	\begin{split}
 	\mathbb{E}_{s,rT}(\|X(t)&-X(s)\|^2|X(s)=\bar{z})\\&\leq \varepsilon^2(t-s)+2\int_{s}^t(R+\varepsilon)(\mathbb{E}_{0,T}\|X(t')-X(s)\||X(s)=\bar{z})dt'.
\end{split} 
\end{equation} Since we assume that $\varepsilon\leq 1$, we have that 
\[\mathbb{E}_{0,T}(\|X(t)-X(s)\|^2|X(s)=\bar{z})\leq C''_1(t-s),\] where
\[C''_1\triangleq e^{2(R+1)T}.\] Plugging this estimate to~\eqref{mp1:ineq:E_X_t_s}, we obtain the statement of the lemma.
\end{proof}

\begin{lemma}\label{lm:nu_change}
	Assume that $s,r\in [0,T]$, $s<r$, $\nu_*=(\nu_{*,\bar{x}})_{\bar{x}\in\mathcal{S}}\in\Sigma^2$, $\zeta_{\mathcal{S}}\in\mathcal{U}^{\mathcal{S}}_{0,T}$, $\mu(\cdot):[0,T]\rightarrow\Sigma^2$, while $\nu(\cdot)$ satisfies
	\begin{equation}\label{mp1:eq:nu}
		\frac{d}{dt}\nu(t)=\nu(t)Q(t,\mu(t),\zeta_{\mathcal{S}}(t)), \nu(s)=\nu_*.
	\end{equation} Then, for each $\bar{x}\in\mathcal{S}$,
\[|\nu_{\bar{x}}(t)-\nu_{*,\bar{x}}|\leq B_Q(t-s).\]
\end{lemma}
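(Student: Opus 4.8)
The plan is to estimate the change in each component $\nu_{\bar x}(t)$ directly from the differential equation~\eqref{mp1:eq:nu}. Writing this equation in coordinates, for each $\bar y\in\mathcal{S}$ we have
\[
\frac{d}{dt}\nu_{\bar y}(t)=\sum_{\bar x\in\mathcal{S}}\nu_{\bar x}(t)\mathcal{Q}_{\bar x,\bar y}(t,\mu(t),\zeta_{\mathcal{S}}),
\]
where I abbreviate $\mathcal{Q}_{\bar x,\bar y}(t,\mu(t),\zeta_{\mathcal{S}})=\int_U Q_{\bar x,\bar y}(t,\mu(t),u)\zeta_{\bar x}(du|t)$. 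Integrating from $s$ to $t$ gives
\[
\nu_{\bar y}(t)-\nu_{*,\bar y}=\int_s^t\sum_{\bar x\in\mathcal{S}}\nu_{\bar x}(t')\mathcal{Q}_{\bar x,\bar y}(t',\mu(t'),\zeta_{\mathcal{S}})\,dt',
\]
so it remains to bound the absolute value of the integrand uniformly in $t'$ by $B_Q$.

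To obtain that bound I would split the sum over $\bar x$ according to whether $\bar x=\bar y$ or $\bar x\neq\bar y$. For the off-diagonal terms, each $\mathcal{Q}_{\bar x,\bar y}$ is nonnegative (being an average over $u$ of the nonnegative off-diagonal entries $Q_{\bar x,\bar y}$), and $\nu_{\bar x}(t')\geq 0$; moreover the Kolmogorov property $\sum_{\bar y}Q_{\bar x,\bar y}=0$ together with nonnegativity of the off-diagonal entries forces $\sum_{\bar y\neq\bar x}Q_{\bar x,\bar y}(t',\mu(t'),u)=-Q_{\bar x,\bar x}(t',\mu(t'),u)\le B_Q$ by assumption~\ref{markov:cond:matrix}. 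Hence each off-diagonal rate satisfies $0\le\mathcal{Q}_{\bar x,\bar y}\le B_Q$, so
\[
0\le \sum_{\bar x\neq\bar y}\nu_{\bar x}(t')\mathcal{Q}_{\bar x,\bar y}(t',\mu(t'),\zeta_{\mathcal{S}})\le B_Q\sum_{\bar x\neq\bar y}\nu_{\bar x}(t')\le B_Q.
\]
For the diagonal term, $\mathcal{Q}_{\bar y,\bar y}\in[-B_Q,0]$ and $\nu_{\bar y}(t')\in[0,1]$, so $-B_Q\le \nu_{\bar y}(t')\mathcal{Q}_{\bar y,\bar y}(t',\mu(t'),\zeta_{\mathcal{S}})\le 0$. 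Adding the two contributions, the full integrand lies in $[-B_Q,B_Q]$, i.e. its absolute value is at most $B_Q$, which yields $|\nu_{\bar y}(t)-\nu_{*,\bar y}|\le B_Q(t-s)$.

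One technical point worth checking is that $\nu(\cdot)$ remains a (sub)probability with nonnegative entries summing to at most $1$ on $[s,r]$; this is the standard fact that the flow of a Kolmogorov matrix preserves the simplex, and it follows because the equation $\frac{d}{dt}\nu=\nu\mathcal{Q}$ with $\mathcal{Q}$ a Kolmogorov matrix has $\sum_{\bar y}\frac{d}{dt}\nu_{\bar y}=\sum_{\bar x}\nu_{\bar x}\sum_{\bar y}\mathcal{Q}_{\bar x,\bar y}=0$ and preserves nonnegativity. I do not anticipate a genuine obstacle here — the estimate is essentially a one-line Grönwall-free bound — the only mild care needed is to argue the diagonal/off-diagonal split cleanly and to invoke the simplex-preservation so that $\sum_{\bar x}\nu_{\bar x}(t')\le 1$, rather than trying to control the sum by something cruder. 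If one wishes to avoid even discussing simplex preservation, the bound $|\nu_{\bar y}(t)-\nu_{*,\bar y}|\le B_Q(t-s)$ can alternatively be derived from the martingale/probabilistic representation, but the direct ODE argument above is the cleanest.
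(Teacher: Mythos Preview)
Your proof is correct and follows essentially the same approach as the paper: integrate the Kolmogorov equation and bound the integrand by $B_Q$ using that $\nu(t')$ is a probability and each $|Q_{\bar{y},\bar{x}}|\le B_Q$. The paper does this in one line via the triangle inequality $\bigl|\sum_{\bar{y}}\nu_{\bar{y}}(t')Q_{\bar{y},\bar{x}}\bigr|\le\sum_{\bar{y}}\nu_{\bar{y}}(t')|Q_{\bar{y},\bar{x}}|\le B_Q$, so your diagonal/off-diagonal split is a slightly more elaborate (but entirely valid) version of the same estimate.
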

\begin{proof}
	We have that 
	\[|\nu_{\bar{x}}(t)-\nu_{*,\bar{x}}|\leq\int_s^t\int_U\sum_{\bar{y}\in\mathcal{S}}\nu_{\bar{y}}(t')\big|Q_{\bar{y},\bar{x}}(t',\mu(t),u)\big|\zeta_{\bar{y}}(du|t'))dt'\leq B_Q(t-s).\]
\end{proof}

\begin{proof}[Proof of Theorem~\ref{th:approx_mp1}]
Let us estimate the squared Wasserstein distance $W_2^2(\mu(t),m(t))$ for $t\in [s_k,s_{k+1}]$, $k=0,\ldots,n-1$. Let $x_{k}(\cdot,y,\bar{z})$ be a solution on $[s_k,s_{k+1}]$ of the differential equation
\[\frac{d}{dt}x(t)=\int_{U} f(t,x(t),m(t),u)\zeta_{\bar{z}}(du|t),\ \ x(s_k)=y.
\] Notice that $m(t)=x_{k}(t,\cdot,\cdot)\sharp\pi_{k}$, while by construction $\mathscr{I}(\mu(t))\triangleq X(t)\sharp \mathbb{P}_{s_k,s_{k+1}}$. Recall that $\mathbb{P}_{s_k,s_{k+1}}$ is a probability that realizes a flow of distributions $\mu(\cdot)$ on $[s_k,s_{k+1}]$, whereas $\mathbb{E}_{s_k,s_{k+1}}$ is the corresponding expectation. Thus, we have that 
\begin{equation}\label{mp1:ineq:W_2_2}
	W_2^2(\mathscr{I}(\mu(t)),m(t))=\int_{\mathcal{K}\times\mathcal{S}}\mathbb{E}_{s_k,s_{k+1}} \big(\|X(t)-x_k(t,y,\bar{z})\|^2|X(s_k)=\bar{z}\big)\pi_{k}(d(y,\bar{z})).
\end{equation}
Now, let us evaluate the quantity $\mathbb{E}_{s_k,s_{k+1}} (\|X(t)-x_k(t,y,\bar{z})\|^2|X(s_k)=\bar{z})$. First, notice that 
\begin{equation*}
	\begin{split}
		\|X(t)-x_k(t&,y,\bar{z})\|^2\leq \|X(s_k)-x_k(s_k,y,\bar{z})\|^2\\ +2\|X(&t)-X(s_k)\|^2+2\|x_k(t,y,\bar{z})-x_k(s_k,y,\bar{z})\|^2\\&+
		2\langle X(t)-X(s_k),X(s_k)-x_k(s_k,y,\bar{z})\rangle\\&-2\langle x_k(t,y,\bar{z})-x_k(s_k,y,\bar{z}),X(s_k)-x_k(s_k,y,\bar{z})\rangle.
	\end{split}
\end{equation*} Thus, 
\begin{equation}\label{mp1:equality:X_t_s}
	\begin{split}
		\mathbb{E}_{s_k,s_{k+1}}\big(\|X(t)-x_k&(t,y,\bar{z})\|^2|X(s_k)=\bar{z}\big)\leq \|y-\bar{z}\|^2\\+ 2\mathbb{E}_{s_k,s_{k+1}}&\big(\|X(t)-\bar{z}\|^2|X(s_k)=\bar{z}\big)+2\|x_k(t,y,\bar{z})-y\|^2\\+
		\big\langle&\mathbb{E}_{s_k,s_{k+1}}( X(t)-\bar{z}|X(s_k)=\bar{z}),\bar{z}-y\big\rangle-\langle x_k(t,y,\bar{z})-y,\bar{z}-y\rangle.
	\end{split}
\end{equation} Due to Lemma~\ref{lm:X_squared}, and the boundness of $f$, we have that
\begin{equation}\label{mp1:ineq:squared_terms}
	\begin{split}
		2\mathbb{E}_{s_k,s_{k+1}}\big(\|X(t)-\bar{z}&\|^2|X(s_k)=\bar{z}\big)+2\|x_k(t,y,\bar{z})-y\|^2\\ &\leq 2\varepsilon^2(t-s)+2C'_1(t-s)^{3/2}+R^2(t-s)^2.
	\end{split}
\end{equation} Further,
\begin{equation*}
\mathbb{E}_{s_k,s_{k+1}}( X(t)-\bar{z}|X(s_k)=\bar{z})=\mathbb{E}_{s_k,s_{k+1}}\Bigg(\int_{s_k}^t \sum_{\bar{y}\in\mathcal{S}} Q_{X(t'),y}(y-X(t'))|X(s)=\bar{z}\Bigg)dt'.
\end{equation*} This and condition~\ref{markov:cond:f} yield that 
\begin{equation*}
\begin{split}
	\Bigg\|\mathbb{E}_{s_k,s_{k+1}}( X(t)-\bar{z}|X(&s_k)=\bar{z})\\-\int_{s_k}^t \mathbb{E}_{s_k,s_{k+1}}\Bigg(&\int_Uf(t',X(t'),\mathscr{I}(\mu(t)),u)\zeta_{X(t')}(du|t')|X(s)=\bar{z}\Bigg)dt'\Bigg\|\\&{}\hspace{210pt}\leq \varepsilon(t-s_k).
\end{split} 
\end{equation*} Recall that
\[
\begin{split}
	\mathbb{E}_{s_k,s_{k+1}}\Bigg(\int_Uf(t',X(t'),&\mathscr{I}(\mu(t)),u)\zeta_{X(t')}(du|t')|X(s)=\bar{z}\Bigg)dt'\\&=
	\sum_{\bar{x}\in\mathcal{S}}\int_Uf(t',\bar{x},\mathscr{I}(\mu(t)),u)\zeta_{X(t')}(du|t')\nu_{*,\bar{x}}.
\end{split}
\]
 Thus, due to Lemma~\ref{lm:nu_change}, we have that 
\begin{equation*}
	\begin{split}
		\Bigg\|\mathbb{E}_{s_k,s_{k+1}}( X(t)-\bar{z}|X(s_k)=\bar{z})-\int_{s_k}^t\int_Uf(t',\bar{z},&\mathscr{I}(\mu(t)),u)\zeta_{\bar{z}}(du|t')dt'\Bigg\|\\&\leq \varepsilon(t-s_k)+RB_Q(t-s_k)^2.
	\end{split} 
\end{equation*}
This and the definition of the motion $x(\cdot,y,\bar{z})$ imply that 
\begin{equation*}
	\begin{split}
		\Bigg|\big\langle\mathbb{E}_{s_k,s_{k+1}}( &X(t)-\bar{z}|X(s_k)=\bar{z}),\bar{z}-y\big\rangle-\langle x_k(t,y,\bar{z})-y,\bar{z}-y\rangle\Bigg| \\ &\leq 
		\int_{s_k}^t\int_U\|f(t',\bar{z},\mathscr{I}(\mu(t)),u)-f(t',x(t',y,\bar{z}),m(t),u)\|\zeta_{\bar{z}}(du|t')dt'\cdot
		\|y-\bar{z}\|\\ &{}\hspace{200pt}+(\varepsilon(t-s_k)+RB_Q(t-s_k)^2)\|y-\bar{z}\|.
	\end{split}
\end{equation*} 
Using the Lipschitz continuity of the function $f$, we obtain the following:
\begin{equation*}
	\begin{split}
		\Bigg|\big\langle\mathbb{E}_{s_k,s_{k+1}}( X(t)-\bar{z}|X(s_k)=\bar{z}),&\bar{z}-y\big\rangle-\langle x_k(t,y,\bar{z})-y,\bar{z}-y\rangle\Bigg| \\ \leq 
		&C_fR(t-s_k)^2\|y-\bar{z}\|\\&+C_f\int_{s_k}^tW_2(\mathscr{I}(\mu(t')),m(t'))\|y-\bar{z}\|\\&+(\varepsilon(t-s_k)+RB_Q(t-s_k)^2)\|y-\bar{z}\|.
	\end{split}
\end{equation*} Plugging this estimate into~\eqref{mp1:equality:X_t_s} and taking into account~\eqref{mp1:ineq:squared_terms}, we derive the estimate:
\[
\begin{split}
	\mathbb{E}_{s_k,s_{k+1}}\big(\|X(t)-x_k(t&,y,\bar{z})\|^2|X(s_k)=\bar{z}\big)\leq \|y-\bar{z}\|^2\\&+2\varepsilon^2(t-s_k) +2C'_1(t-s)^{3/2}+R(t-s_k)^2\\&+C_fR(t-s_k)^2+C_fR(t-s_k)\|y-\bar{z}\|^2\\& +C_f(t-s_k)W_2^2(\mathscr{I}(\mu(s_k)),m(s_k)) +3C_f(t-s_k)\|y-\bar{z}\|^2\\&+C_f\varepsilon^2(t-s_k)^2+C'_1C_f(t-s_k)^{5/2}+RC_f(t-s_k)^2\\&+
	(\varepsilon+RB_Q(t-s_k))^2(t-s_k)+(t-s_k)\|y-\bar{z}\|^2\\ \leq
	\|y-\bar{z}\|^2&+C'_2(t-s_k)\|y-\bar{z}\|^2 +C'_3(t-t_k)W_2^2(\mathscr{I}(\mu(s_k)),m(s_k))\\&+3\varepsilon(t-s_k)+2C'_1(t-s_k)^{3/2} +C'_4(t-s_k)^2\\&+C'_5\varepsilon B_Q(t-s_k)^2+R^2B_Q^2(t-s_k)^3.
\end{split}
\] Due to~\eqref{mp1:ineq:W_2_2}, we arrive at the inequality
\begin{equation}\label{mp1:ineq:wasserstein_final}
\begin{split}
	W_2^2(\mathscr{I}(\mu(t)),m(t))\leq (1+C'_6&(t-s_k))W_2^2(\mathscr{I}(\mu(s_k)),m(s_k))\\&+3\varepsilon(t-s_k)+2C'_1(t-s_k)^{3/2} +C'_4(t-s_k)^2\\&+C'_5\varepsilon B_Q(t-s_k)^2+R^2B_Q^2(t-s_k)^3.
\end{split}\end{equation} Applying this inequality sequentially, we deduce the statement of the theorem.
\end{proof}

\section{Model predictive control for Markov chains}\label{sect:model_for_Markov_chain}
In this section, given an initial distribution for the deterministic mean field type control system $m_0\in\pK$, a distribution of controls $\alpha\in\mathcal{U}_{0,T}$ such that $\alpha\in\mathcal{A}_{0,T}[m_0]$,  and an initial system for mean field Markov chain $\mu_0\in\Sigma^2$, we construct a feedback strategy $\zeta_{\mathcal{S}}$ such that the corresponding motion of the Markov chain starting at $\mu_0$ approximates the motion $m(\cdot,0,\alpha)$. Within this section, we denote 
\[m(\cdot)\triangleq m(\cdot,0,\alpha).\] Further, for $(y,\xi)\in \rd\times\mathcal{U}_{0,T}$ and $s\in [0,T]$, we put
\[\mathscr{T}^s(y,\xi)\triangleq x(s,0,y,m(\cdot),\xi).\] Notice that, if $(y',\xi')=\mathscr{T}^s(y,\xi)$, then
\[x(\cdot,0,y,m(\cdot),\xi)=x(\cdot,s,y',m(\cdot),\xi').\] Informally, the operator $\mathscr{T}^s$ transfers the initial condition and the control from $t=0$ to the time $s$. 

Below, if $\zeta_{\mathcal{S}},\zeta'_{\mathcal{S}}$ are feedback controls for the Markov chain on $[s,r]$ and $[r,\theta]$ respectively, then we denote by $\zeta_{\mathcal{S}}\diamond_r\zeta_{\mathcal{S}}'$ the feedback control such that $\zeta_{\mathcal{S}}\diamond_r\zeta_{\mathcal{S}}'=(\zeta_{\bar{x}}\diamond_r\zeta_{\bar{x}}')_{\bar{x}\in\mathcal{S}}$.

Finally, let $\Delta=\{s_i\}_{i=0}^n$ be a partition of $[0,T]$.

 The construction is stepwise.
\begin{enumerate}[label=(M\arabic*)]
	\item Let $\pi_0$ be an optimal plan between $m_0$ and $\mathscr{I}(\mu_0)$ and let $\pi_0(\cdot|\bar{x})$ be its disintegration w.r.t. $\mathscr{I}(\mu_0)$. We define a probability $\zeta_{\mathcal{S},0}=(\zeta_{\bar{x},0})_{\bar{x}}\in \mathcal{U}_{s_0,s_1}^{\mathcal{S}}$ by the rule: for $\phi\in C_b([s_0,s_1]\times U)$
	\[\begin{split}
	\int_{[s_0,s_1]\times U}\phi(t,u)&\zeta_{\bar{x},0}(d(t,u))\\&\triangleq \int_{\mathcal{K}}\int_{\mathcal{U}_{s_0,s_{1}}}\int_{[s_0,s_1]\times U}\phi(t,u)\xi(d(t,u))\alpha_0(\xi|y)\pi_0(dy|\bar{x}). \end{split}\] 
	Hereinafter, $\alpha_0$ is a restriction of $\alpha$ on $[s_0,s_1]$. 
	\item Assume now that we already constructed controls $\zeta_{\mathcal{S},0},\ldots,\zeta_{\mathcal{S},k-1}$ and put, for $t\in [0,s_k]$,
	\[\mu(t)\triangleq \mu(t,0,\mu_0,\zeta_{\mathcal{S},0}\diamond_{s_1}\ldots\diamond_{s_{k-1}}\zeta_{\mathcal{S},k-1}).\] To extend the control to the next time step, we first introduce $\alpha_{k}$ to be a restriction of the distribution of controls $\mathscr{T}^{s_k}\sharp \alpha$ to the time interval $[s_k,s_{k+1}]$. Further, let $\pi_k$ be an optimal plan between $\mathscr{I}(\mu(s_k))$ and $m(s_k)$. A feedback control $\zeta_{\mathcal{S},k}=(\zeta_{\bar{x},k})_{\bar{x}\in\mathcal{S}}\in\mathcal{U}^{\mathcal{S}}_{[s_k,s_{k+1}]}$ is defined by the rule: if $\phi\in C([s_{k},s_{k+1}]\times U)$, then 
	\[\begin{split}
	\int_{[s_0,s_1]\times U}\phi(t,u)&\zeta_{\bar{x},k}(d(t,u))\\&\triangleq \int_{\mathcal{K}}\int_{\mathcal{U}_{s_k,s_{k+1}}}\int_{[s_k,s_{k+1}]\times U}\phi(t,u)\xi(d(t,u))\alpha_k(d\xi|y)\pi_k(dy|\bar{x}).\end{split} \] 
	 
\end{enumerate}

\begin{theorem}\label{th:MP_markov} Let conditions~\ref{markov:cond:space}--\ref{markov:cond:variation} hold true and let $\mu(\cdot)\triangleq \mu(\cdot,0,\mu_0,\zeta_{\mathcal{S},0}\diamond_{s_1}\ldots\diamond_{s_{n-1}}\zeta_{\mathcal{S},n-1})$, then \[\begin{split}
		W_p(m(t),\mathscr{I}(\mu(t)))\leq C_0W_2(\mathscr{I}(\mu_0)&,m_0)+C_1\varepsilon+C_2d^{1/2}(\Delta)\\&+C_3d(\Delta)+C_4\varepsilon B_Qd(\Delta)+C_5B_Qd^2(\Delta).\end{split}\] Here $C_0,\ldots, C_5$ are the same constant as in Theorem~\ref{th:approx_mp1}.
\end{theorem}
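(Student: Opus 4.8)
The plan is to mirror the argument for Theorem~\ref{th:approx_mp1} as closely as possible, since the two statements have identical right-hand sides and differ only in which system is ``driven'' and which is ``constructed.'' The key observation is that on each subinterval $[s_k,s_{k+1}]$ the feedback control $\zeta_{\mathcal{S},k}$ is built by integrating, against the disintegration $\pi_k(\cdot|\bar x)$ of an optimal plan between $\mathscr{I}(\mu(s_k))$ and $m(s_k)$, the restriction $\alpha_k$ of the transferred distribution of controls $\mathscr{T}^{s_k}\sharp\alpha$. Consequently the ODE trajectory $x(\cdot,s_k,y,m(\cdot),\xi)$ of the deterministic system, with $(y,\xi)$ distributed according to $\alpha_k$, and the Markov chain position $X(\cdot)$ started from the $\bar x$ matched to $y$ by $\pi_k$, are coupled through $\pi_k$; the marginal relation $m(t)=x_k(t,\cdot,\cdot)\sharp(\text{pushforward of }\alpha_k\text{ along }\pi_k)$ and $\mathscr{I}(\mu(t))=X(t)\sharp\mathbb{P}_{s_k,s_{k+1}}$ give exactly the analogue of~\eqref{mp1:ineq:W_2_2}.

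\textbf{Key steps.} First I would set up the coupling on $[s_k,s_{k+1}]$: for $(y,\xi)\in\mathcal{K}\times\mathcal{U}_{s_k,s_{k+1}}$ and $\bar x\in\mathcal{S}$, let $x_k(t,y,\xi)$ solve the relaxed ODE with control $\xi$ along the frozen flow $m(\cdot)$, starting at $y$, and run the chain from $\bar x$; then write $W_2^2(\mathscr{I}(\mu(t)),m(t))$ as the expectation of $\|X(t)-x_k(t,y,\xi)\|^2$ against $\pi_k$ and (for the control) against $\alpha_k(\cdot|y)$. Second, expand $\|X(t)-x_k(t,y,\xi)\|^2$ around $t=s_k$ exactly as in~\eqref{mp1:equality:X_t_s}, keeping the squared increment terms, and bound the $\|X(t)-\bar z\|^2$ term by Lemma~\ref{lm:X_squared} and the $\|x_k(t,y,\xi)-y\|^2$ term by $R^2(t-s_k)^2$ via boundedness of $f$. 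Third, handle the cross term: here the drift of the chain conditioned on $X(s_k)=\bar z$ is $\int_{s_k}^t\mathbb{E}(\sum_{\bar y}Q_{X(t'),\bar y}(\bar y-X(t'))\,|\,\cdot)\,dt'$, which by~\ref{markov:cond:f} is within $\varepsilon(t-s_k)$ of $\int_{s_k}^t\mathbb{E}(\int_U f(t',X(t'),\mathscr{I}(\mu(t')),u)\zeta_{X(t')}(du|t')\,|\,\cdot)\,dt'$; using Lemma~\ref{lm:nu_change} to replace $X(t')$ by $\bar z$ in the outer argument at cost $RB_Q(t-s_k)^2$, and then Lipschitz continuity of $f$ in $(x,m)$ to compare $f(t',\bar z,\mathscr{I}(\mu(t')),u)$ with $f(t',x_k(t',y,\xi),m(t'),u)$, produces the $C_fR(t-s_k)^2\|y-\bar z\|$ and $C_f\int_{s_k}^t W_2(\cdots)\,dt'\cdot\|y-\bar z\|$ contributions. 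Fourth, assemble these into the one-step Grönwall-type inequality of the form~\eqref{mp1:ineq:wasserstein_final}, namely $W_2^2(\mathscr{I}(\mu(t)),m(t))\le(1+C'_6(t-s_k))W_2^2(\mathscr{I}(\mu(s_k)),m(s_k))+3\varepsilon(t-s_k)+2C'_1(t-s_k)^{3/2}+C'_4(t-s_k)^2+C'_5\varepsilon B_Q(t-s_k)^2+R^2B_Q^2(t-s_k)^3$, and iterate over $k=0,\ldots,n-1$ with $W_2(\mathscr{I}(\mu_0),m_0)$ as the seed, collecting the geometric factors $\prod(1+C'_6(s_{k+1}-s_k))\le e^{C'_6 T}$ and summing $\sum(s_{k+1}-s_k)=T$, $\sum(s_{k+1}-s_k)^{3/2}\le T\,d^{1/2}(\Delta)$, etc., to land on the stated bound with the same $C_0,\ldots,C_5$.

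\textbf{Main obstacle.} The delicate point is verifying that the coupling built from $\pi_k$ and $\alpha_k$ really realizes $m(t)$ on the marginal: one must check that integrating $x_k(t,y,\xi)$ against $\alpha_k(\cdot|y)$ and then $\pi_k(dy|\bar x)$ and then $\mu_{\bar x}(s_k)$ reproduces $m(t)$ on $[s_k,s_{k+1}]$, which requires Proposition~\ref{prop:concatination} together with the identity $x(\cdot,0,y,m(\cdot),\xi)=x(\cdot,s_k,\mathscr{T}^{s_k}(y,\xi))$ and the fact that $m(\cdot,0,\alpha)$ restricted to $[s_k,s_{k+1}]$ is driven by the restricted distribution of controls — so that the frozen flow $m(\cdot)$ in the ODE for $x_k$ is consistent. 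A secondary bookkeeping subtlety is that, unlike in Theorem~\ref{th:approx_mp1}, the relaxed control $\xi$ now carries its own randomness, so the conditional expectations must be taken jointly over the chain's paths and over $\alpha_k(\cdot|y)$; since $f$ is affine in the relaxed control this introduces no new error terms, but one must be careful that Lemma~\ref{lm:nu_change} is applied with the averaged feedback $\zeta_{\mathcal{S},k}$ and the same $B_Q$ bound, which holds because averaging a control measure does not increase the entrywise bound on $Q$. Once these consistency checks are in place, the estimate is a line-by-line repetition of the proof of Theorem~\ref{th:approx_mp1}.
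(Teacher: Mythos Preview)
Your proposal is correct and follows essentially the same route as the paper's own proof: the paper explicitly states that the argument ``mimics the proof of Theorem~\ref{th:approx_mp1},'' sets up the same coupling via $\pi_k$ and $\alpha_k(\cdot|y)$ to obtain the analogue of~\eqref{mp1:ineq:W_2_2} (now with an extra integration in $\xi$), bounds the squared increments via Lemma~\ref{lm:X_squared}, handles the cross term using~\ref{markov:cond:f} and Lemma~\ref{lm:nu_change} together with the definition of $\zeta_{\mathcal{S},k}$, and iterates the resulting one-step inequality~\eqref{mp1:ineq:wasserstein_final}. Your identification of the marginal-consistency check and the harmless extra randomness in $\xi$ as the only new bookkeeping points is accurate and, if anything, more explicit than the paper's own presentation.
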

\begin{proof}
	The proof mimics the proof of Theorem~\ref{th:approx_mp1}. We consider a time interval $[s_k,s_{k+1}]$ and choose $\mathbb{P}_{s_k,s_{k+1}}$ to be a realization of the flow $\mu(\cdot)$ on $[s_k,s_{k+1}]$.  If $t\in [s_{k-1},s_k]$, then 
	\begin{equation}\label{mmp:ineq:wasserstein}\begin{split}
	W_2^2(m&(t),\mathscr{I}(\mu(t)))\\&\leq \int_{\mathcal{K}\times\mathcal{S}}\int_{\mathcal{U}_{s_k,s_{k+1}}} \mathbb{E}_{s_k,s_{k+1}}(\|X(t)-x_k(t,y,\xi)\|^2|X(s_k)=\bar{z})\pi_k(d(y,\bar{z})).
	\end{split}
	\end{equation} Here, as above, we denote 
\[x_k(t,y,\xi)\triangleq x(t,s_{k},y,m(\cdot),\xi).\]
Further, we have that 
\begin{equation*}
	\begin{split}
		\mathbb{E}_{s_k,s_{k+1}}&(\|X(t)-x_k(t,y,\xi)\|^2|X(s_k)=\bar{z}) \leq \|\bar{z}-y\|^2\\&+
		2\mathbb{E}_{s_k,s_{k+1}}(\|X(t)-X(s_k)\|^2|X(s_k)=\bar{z})+2\|x_k(t,y,\xi)-y\|^2\\&+
		2\mathbb{E}_{s_k,s_{k+1}}(\langle X(t)-X(s_k),\bar{z}-y\rangle|X(s_k)=\bar{z})\\&-2\langle x_k(t,y,\xi)-y,\bar{z}-y\rangle.
	\end{split}
\end{equation*} Using Lemma~\ref{lm:X_squared}, we conclude that 
\begin{equation}\label{mmp:ineq:squared_terms}
	\begin{split}
		2\mathbb{E}_{s_k,s_{k+1}}\big(\|X(t)-X(s_k)&\|^2|X(t_k)=\bar{z}\big)+2\|x_k(t,y,\bar{z})-y\|^2\\ &\leq 2\varepsilon^2(t-s)+2C'_1(t-s)^{3/2}+R^2(t-s)^2.
	\end{split}
\end{equation}
Further, as in the proof of Theorem~\ref{th:approx_mp1}, we have that 
\begin{equation*}
	\begin{split}
		\Bigg\|\mathbb{E}_{s_k,s_{k+1}}( X(t)-\bar{z}|X(s_k)=\bar{z})-\int_{s_k}^t\int_Uf(t',\bar{z},&\mathscr{I}(\mu(t')),u)\zeta_{\bar{z},k}(du|t')dt'\Bigg\|\\&\leq \varepsilon(t-s_k)+RB_Q(t-s_k)^2.
	\end{split} 
\end{equation*}
Simultaneously, 
\begin{equation*}
	x_k(t,y,\xi)-y=\int_{s_k}^t \int_Uf(t',x_k(t',y,\xi),m(t'),u)\xi(du|t')dt'
\end{equation*} Plugging this into~\ref{mmp:ineq:wasserstein} and taking into account the definition of $\zeta_{\mathcal{S},k}$, we conclude that 
\begin{equation*}
	\begin{split}
		W_2^2(m&(t),\mathscr{I}(\mu(t)))\leq W_2^2(m(s_k),\mathscr{I}(\mu(s_k)))\\&+2\varepsilon^2(t-s)+2C'_1(t-s)^{3/2}+R(t-s)^2+
		\varepsilon(t-s_k)+B_Q(t-s_k)^2\\&+
		\int_{\mathcal{K}\times\mathcal{S}}\int_{\mathcal{U}_{s_k,s_{k+1}}}\Bigg\|\int_U f(t',\bar{z},\mathscr{I}(\mu(t')),u)\xi(du|t')\\ &{}\hspace{100pt}-\int_Uf(t',x_k(t',y,\xi),m(t'),u)\xi(du|t')\Bigg| \cdot \|\bar{z}-y\|.
	\end{split}
\end{equation*}
Estimating the last term as in the proof of Theorem~\ref{th:approx_mp1}, we obtain the inequality:
\begin{equation}\label{mmp:ineq:final}
\begin{split}
	W_2^2(\mathscr{I}(\mu(t)),m(t))\leq (1+C'_6&(t-s_k))W_2^2(\mathscr{I}(\mu(s_k)),m(s_k))\\&+3\varepsilon(t-s_k)+2C'_1(t-s_k)^{3/2} +C'_4(t-s_k)^2\\&+C'_5\varepsilon B_Q(t-s_k)^2+R^2B_Q^2(t-s_k)^3.
\end{split}\end{equation} Here the constants are the same as in~\eqref{mp1:ineq:wasserstein_final}. Applying~\eqref{mmp:ineq:final} sequentially, we arrive at the statement of the theorem.
\end{proof}

\section{Hausdorff distance between bundles of flows}\label{sect:Hausdorff}
In this short section, we consider the bundles of flows of probabilities generated by the original first order mean field type control system and the mean field Markov chain. To define them, let
\[\mathcal{X}(m_0)\triangleq \big\{m(\cdot,0,\alpha):\alpha\in\mathcal{A}_{0,T}[m_0]\big\},\]
\[\mathcal{X}_Q(\mu_0)\triangleq \big\{\mu(\cdot,0,\mu,\zeta_{\mathcal{S}}):\zeta_{\mathcal{S}}\in\mathcal{U}_{0,T}^{\mathcal{S}}\big\}.\]
Notice that $\mathcal{X}(m_0)\subset C([0,T];\prd)$, while $\mathcal{X}_Q\subset C([0,T];\Sigma^2)$

If $m(\cdot)\in C([0,T];\prd)$, $\mu\in C([0,T];\Sigma^2)$, we denote 
\[\mathbbm{d}(m(\cdot),\mu(\cdot))\triangleq \sup_{t\in [0,T]}W_2(m(t),\mathscr{I}(\mu(t))).\] If $\Upsilon_1\subset C([0,T];\prd)$, $\Upsilon_2\subset C([0,T],\Sigma^2)$ are closed sets, then we introduce the Hausdorff distance in the standard way:
\[\mathbbm{H}(\Upsilon_1,\Upsilon_2)\triangleq \max\Bigg\{\sup_{m(\cdot)\in \Upsilon_1}\inf_{\mu(\cdot)\in \Upsilon_2}\mathbbm{d}(m(\cdot),\mu(\cdot)),\sup_{\mu(\cdot)\in \Upsilon_2}\inf_{m(\cdot)\in \Upsilon_1}\mathbbm{d}(m(\cdot),\mu(\cdot))\Bigg\}.\]

The main result of this short section is the following.
\begin{proposition}\label{prop:Hausdorff} Assume that the approximation conditions~\ref{markov:cond:space}--\ref{markov:cond:variation} are in force. Then,
	\[\mathbbm{H}(\mathcal{X}(m_0),\mathcal{X}_Q(\mu_0))\leq  C_0W_2(\mathscr{I}(\mu_0),m_0)+C_1\varepsilon,\] where $C_0$ and $C_1$ are constants dependent only on $f$ and $T$.
\end{proposition}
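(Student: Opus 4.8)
The plan is to derive Proposition~\ref{prop:Hausdorff} as a straightforward consequence of Theorems~\ref{th:approx_mp1} and~\ref{th:MP_markov} by passing to the limit along a sequence of partitions with vanishing fineness. The point is that the right-hand sides of both theorems converge, as $d(\Delta)\to 0$, to the partition-independent quantity $C_0W_2(\mathscr{I}(\mu_0),m_0)+C_1\varepsilon$: indeed all the terms $C_2d^{1/2}(\Delta)$, $C_3d(\Delta)$, $C_4\varepsilon B_Qd(\Delta)$, $C_5B_Qd^2(\Delta)$ tend to zero. So one obtains, for every $\eta>0$, both a one-sided and the other one-sided approximation up to $C_0W_2(\mathscr{I}(\mu_0),m_0)+C_1\varepsilon+\eta$, and then lets $\eta\to 0$.

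Concretely, I would proceed as follows. First, fix $m(\cdot)=m(\cdot,0,\alpha)\in\mathcal{X}(m_0)$ arbitrary. For each partition $\Delta$ with $d(\Delta)\le 1$, run construction \ref{mp1:cond:initial}--\ref{mp1:cond:step}... wait — for this direction we need construction (M1)--(M2) of Section~\ref{sect:model_for_Markov_chain}, which produces a feedback control $\zeta_{\mathcal{S}}^\Delta$ and hence a motion $\mu^\Delta(\cdot)=\mu(\cdot,0,\mu_0,\zeta_{\mathcal{S}}^\Delta)\in\mathcal{X}_Q(\mu_0)$. By Theorem~\ref{th:MP_markov}, $\mathbbm{d}(m(\cdot),\mu^\Delta(\cdot))=\sup_t W_2(m(t),\mathscr{I}(\mu^\Delta(t)))$ is bounded by the displayed expression, whose partition-dependent part is at most $\omega(d(\Delta))$ for a modulus $\omega$ depending only on $f,T,\varepsilon,B_Q$ with $\omega(0+)=0$. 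Hence $\inf_{\mu(\cdot)\in\mathcal{X}_Q(\mu_0)}\mathbbm{d}(m(\cdot),\mu(\cdot))\le C_0W_2(\mathscr{I}(\mu_0),m_0)+C_1\varepsilon+\omega(d(\Delta))$, and sending $d(\Delta)\to 0$ gives the bound without the error term. Taking the supremum over $m(\cdot)\in\mathcal{X}(m_0)$ controls the first term in the $\max$ defining $\mathbbm{H}$. Symmetrically, for fixed $\mu(\cdot)=\mu(\cdot,0,\mu_0,\zeta_{\mathcal{S}})\in\mathcal{X}_Q(\mu_0)$, one applies construction \ref{mp1:cond:initial}--\ref{mp1:cond:step} (with $\hat\zeta$ built from the given $\zeta_{\mathcal{S}}$) and Theorem~\ref{th:approx_mp1} to produce, for each $\Delta$, a motion $m^\Delta(\cdot)\in\mathcal{X}(m_0)$ with $\mathbbm{d}(m^\Delta(\cdot),\mu(\cdot))\le C_0W_2(\mathscr{I}(\mu_0),m_0)+C_1\varepsilon+\omega(d(\Delta))$; passing to the limit and taking the supremum over $\mu(\cdot)$ controls the second term. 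Combining the two bounds yields the proposition.

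A technical point I would address carefully is that Theorems~\ref{th:approx_mp1} and~\ref{th:MP_markov} are stated for the value $W_p(m(t),\mathscr{I}(\mu(t)))$ at a single time $t$, whereas $\mathbbm{d}$ is the supremum over $t\in[0,T]$; but since the estimate holds uniformly in $t$ with a $t$-independent right-hand side, taking the sup over $t$ is harmless, so $\mathbbm{d}(m(\cdot),\mu(\cdot))$ is bounded by the same expression. (I would also note the evident requirement that $\mathcal{X}(m_0)$ and $\mathcal{X}_Q(\mu_0)$ be nonempty, which follows from Proposition~\ref{prop:existence} and the existence of solutions to~\eqref{markov:eq:Kolmogorov_system}.)

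The main obstacle — if there is one — is not analytic but bookkeeping: one must make sure the constants $C_0,\dots,C_5$ and hence the modulus $\omega$ genuinely depend only on $f$, $T$ (and the fixed data $\varepsilon$, $B_Q$ of the approximating chain) and not on $\Delta$, $\alpha$, or $\zeta_{\mathcal{S}}$, so that the infimum over partitions can be taken after fixing the target motion. This is exactly what Theorems~\ref{th:approx_mp1} and~\ref{th:MP_markov} assert, so the argument reduces to a clean limiting procedure; no new estimates are needed.
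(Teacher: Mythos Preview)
Your proposal is correct and follows essentially the same approach as the paper: both directions of the Hausdorff bound are obtained by invoking Theorems~\ref{th:approx_mp1} and~\ref{th:MP_markov} for a fixed target motion and a partition $\Delta$, then letting $d(\Delta)\to 0$ so that the partition-dependent terms vanish. Your write-up is in fact more careful than the paper's (you spell out the passage from the pointwise-in-$t$ estimate to the $\sup_t$ defining $\mathbbm{d}$, and you flag the uniformity of the constants), but no substantive difference in strategy is present.
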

\begin{proof}
From Theorem~\ref{th:MP_markov}, it follows that, given $m(\cdot)\in \mathcal{X}(m_0)$ and arbitrary $\delta>0$, one can find a partition of $[0,T]$ $\Delta=\{s_i\}_{i=0}^n$ and controls $\zeta_{\mathcal{S}},\ldots,\zeta_{\mathcal{S},n-1}$ such that
\[\mathbbm{d}(m(\cdot),\mu(\cdot))\leq C_0W_2(\mathscr(\mu_0),m_0)+C_1\varepsilon+\delta,\] where $\mu(\cdot)=\mu(\cdot,0,\mu_0,\zeta_{\mathcal{S},0}\diamond_{s_1}\ldots\diamond_{s_{n-1}}\zeta_{\mathcal{\S},n-1})$. Passing to the limit while $\delta\rightarrow 0$, we conclude that 
\[\sup_{m(\cdot)\in \mathcal{X}(m_0)}\inf_{\mu(\cdot)\in \mathcal{X}_Q(\mu_0)}\mathbbm{d}(m(\cdot),\mu(\cdot))\leq C_0W_2(\mathscr{I}(\mu_0),m_0)+C_1\varepsilon.\] The inequality \[\sup_{\mu(\cdot)\in \mathcal{X}_Q(\mu_0)}\inf_{m(\cdot)\in \mathcal{X}(m_0)}\mathbbm{d}(m(\cdot),\mu(\cdot))\leq C_0W_2(\mathscr{I}(\mu_0),m_0)+C_1\varepsilon\] is proved in the similar way using Theorem~\ref{th:approx_mp1}.
\end{proof}

\bibliography{mpc}

\begin{thebibliography}{10}

\bibitem{ahmed_ding_controlld}
N.~Ahmed and X.~Ding.
\newblock Controlled {M}c{K}ean-{V}lasov equation.
\newblock {\em Commun. Appl. Anal.}, 5:183--206, 2001.

\bibitem{Ambrosio}
L.~Ambrosio, N.~Gigli, and G.~Savar\'{e}.
\newblock {\em Gradient flows: in metric spaces and in the space of probability
  measures}.
\newblock Lectures in Mathematics. ETH Zurich. Birkh{\"a}user, Basel, 2005.

\bibitem{Averboukh2016}
Y.~Averboukh.
\newblock Approximate solutions of continuous-time stochastic games.
\newblock {\em SIAM J. Control Optim.}, 54(5):2629--2649, 2016.

\bibitem{Averboukh2018a}
Y.~Averboukh.
\newblock Viability theorem for deterministic mean field type control systems.
\newblock {\em Set-Valued Var. Anal.}, 26(4):993--1008, 2018.

\bibitem{Averboukh2021a}
Y.~Averboukh.
\newblock Lattice approximations of the first-order mean field type
  differential games.
\newblock {\em Nonlinear Differ. Equ. Appl.}, 28(6):42, 2021.
\newblock Id/No 65.

\bibitem{averboukh_khlopin}
Y.~Averboukh and D.~Khlopin.
\newblock Pontryagin maximum principle for the deterministic mean field type
  optimal control problem via the lagrangian approach.
\newblock Preprint at arXiv:2207.01892, 2022.

\bibitem{Averboukh2021}
Y.~Averboukh, A.~Marigonda, and M.~Quincampoix.
\newblock Extremal shift rule and viability property for mean field-type
  control systems.
\newblock {\em J. Optim. Theory Appl.}, 189(1):244--270, 2021.

\bibitem{Averboukh2022}
Y.~V. Averboukh.
\newblock A mean field type differential inclusion with upper semicontinuous
  right-hand side.
\newblock {\em Vestn. Udmurt. Univ.: Mat. Mekhanika Komp'yuternye Nauki},
  32(4):489--501, 2022.

\bibitem{Bayraktar_Cosso_Pham_randomized}
E.~Bayraktar, A.~Cosso, and H.~Pham.
\newblock Randomized dynamic programming principle and {F}eynman-{K}ac
  representation for optimal control of {M}c{K}ean-{V}lasov dynamics.
\newblock {\em Trans. Amer. Math. Soc.}, 370:2115--2160, 2018.

\bibitem{Bellomo2012}
N.~Bellomo, B.~Piccoli, and A.~Tosin.
\newblock Modeling crowd dynamics from a complex system viewpoint.
\newblock {\em Math. Models Methods Appl. Sci.}, 22:1230004, 29, 2012.

\bibitem{Bivas2021a}
M.~Bivas and M.~Quincampoix.
\newblock Nonsmooth feedback control for multi-agent dynamics.
\newblock {\em Set-Valued Var. Anal.}, 29(2):501--518, 2021.

\bibitem{Bonnet2021}
B.~Bonnet and H.~Frankowska.
\newblock Differential inclusions in {W}asserstein spaces: the
  {Cauchy}-{Lipschitz} framework.
\newblock {\em J. Differ Equ.}, 271:594--637, 2021.

\bibitem{Bonnet2022}
B.~Bonnet and H.~Frankowska.
\newblock Viability and exponentially stable trajectories for differential
  inclusions in {W}asserstein spaces.
\newblock In {\em 2022 IEEE 61st Conference on Decision and Control (CDC)},
  pages 5086--5091. IEEE, 2022.

\bibitem{bonnet2023}
B.~Bonnet-Weil and H.~Frankowska.
\newblock Carath\'{e}odory theory and a priori estimates for continuity
  inclusions in the space of probability measures.
\newblock Preprint at arXiv:2302.00963, 2023.

\bibitem{Buckdahn_Boualem_Li_PMP_SDE}
R.~Buckdahn, B.~Djehiche, and J.~Li.
\newblock A general stochastic maximum principle for {S}{D}{E}s of mean-field
  type.
\newblock {\em Appl. Math. Optim.}, 64(2):197--216, 2011.

\bibitem{Bullo2009}
F.~Bullo, J.~Cort{\'e}s, and S.~Mart{\'{\i}}nez.
\newblock {\em Distributed control of robotic networks: a mathematical approach
  to motion coordination algorithms.}
\newblock Princeton Ser. Appl. Math. Princeton, NJ: Princeton University Press,
  2009.

\bibitem{Carmona_Delarue_PMP}
R.~Carmona and F.~Delarue.
\newblock Forward-backward stochastic differential equations and controlled
  {M}c{K}ean--{V}lasov dynamics.
\newblock {\em Ann. Probab.}, 43(5):2647--2700, 2015.

\bibitem{Colombo2011}
R.~M. Colombo, M.~Garavello, and M.~L{\'e}cureux-Mercier.
\newblock Non-local crowd dynamics.
\newblock {\em C. R. Acad. Sci. Paris S\'{e}r. I Math.}, 349(13-14):769--772,
  2011.

\bibitem{Colombo2005}
R.~M. Colombo and M.~D. Rosini.
\newblock Pedestrian flows and non-classical shocks.
\newblock {\em Math. Methods Appl. Sci.}, 28(13):1553--1567, 2005.

\bibitem{Cristiani2014}
E.~Cristiani, B.~Piccoli, and A.~Tosin.
\newblock {\em Multiscale modeling of pedestrian dynamics}, volume~12 of {\em
  MS\&A, Model. Simul. Appl.}
\newblock Springer, 2014.

\bibitem{Meyer}
C.~Dellacherie and P.~Meyer.
\newblock {\em Probabilities and potential}.
\newblock North-Holland Publishing Co., Amsterdam, 1978.

\bibitem{Djehiche_Hamdine}
B.~Djehiche and S.~Hamad\`{e}ne.
\newblock Optimal control and zero-sum stochastic differential game problems of
  mean-field type.
\newblock {\em Appl. Math. Optim.}, 81:933--960, 2020.

\bibitem{Model_predictive}
L.~Gr{\"{u}}ne and J.~Pannek.
\newblock {\em Nonlinear Model Predictive Control. Theory and Algorithms}.
\newblock Springer, 2011.

\bibitem{Jimenez_Marigonda_Quincampoix}
C.~Jimenez, A.~Marigonda, and M.~Quincampoix.
\newblock Optimal control of multiagent systems in the {W}asserstein space.
\newblock {\em Calc. Var. Partial Differ. Equ.}, 59:Article number: 58, 2020.

\bibitem{Kolokoltsov}
V.~N. Kolokoltsov.
\newblock {\em Markov Processes, Semigroups and Generators}.
\newblock De Gruyter, Berlin, 2011.

\bibitem{a6}
N.~N. Krasovskii and A.~N. Kotelnikova.
\newblock Unification of differential games, generalized solutions of the
  hamilton-jacobi equations, and a stochastic guide.
\newblock {\em Differ. Equ.}, 45(11):1653--1668, 2009.

\bibitem{a4}
N.~N. Krasovskii and A.~N. Kotelnikova.
\newblock An approach-evasion differential game: stochastic guide.
\newblock {\em Proc. Steklov Inst. Math.}, 269 Supplement(1
  Supplement):191--213, 2010.

\bibitem{a5}
N.~N. Krasovskii and A.~N. Kotelnikova.
\newblock On a differential interception game.
\newblock {\em Proc. Steklov Inst. Math.}, 268(1):161--206, 2010.

\bibitem{kras_delay}
N.~N. Krasovskii and A.~N. Kotelnikova.
\newblock Stochastic guide for a time-delay object in a positional differential
  game.
\newblock {\em Proc. Steklov Inst. Math.}, 277 Supplement(1
  Supplement):145--151, 2012.

\bibitem{Lauriere_Pironneau_DPP_MF_control}
M.~Lauri\`{e}re and O.~Pironneau.
\newblock Dynamic programming for mean-field type control.
\newblock {\em C. R. Math. Acad. Sci. Paris}, 352(9):707--713, 2014.

\bibitem{McKean}
H.~P. McKean.
\newblock A class of {Markov} processes associated with nonlinear parabolic
  equations.
\newblock {\em Proc. Natl. Acad. Sci. U.S.A.}, 56:1907--1911, 1966.

\bibitem{Pham_Wei_2015_DPP_2016}
H.~Pham and X.~Wei.
\newblock Dynamic programming for optimal control of stochastic
  {M}c{K}ean-{V}lasov dynamics.
\newblock {\em SIAM J. Control Optim.}, 55:1069--1101, 2017.

\bibitem{Pham_Wei_2015_Bellman}
H.~Pham and X.~Wei.
\newblock Bellman equation and viscosity solutions for mean-field stochastic
  control problem.
\newblock {\em ESAIM Control Optim. Calc. Var.}, 24(1):437--461, 2018.

\bibitem{Pogodaev}
N.~Pogodaev.
\newblock Optimal control of continuity equations.
\newblock {\em Nonlinear Differ. Equ. Appl.}, 23:Art21, 24 pp., 2016.

\bibitem{Sznitman}
A.-S. Sznitman.
\newblock Topics in propagation of chaos.
\newblock In {\em Ecole d\'{e}t\'{e} de probabilit\'{e}s de Saint-Flour XIX --
  1989}, volume 1464 of {\em Lecture Notes in Math.} Springer, Berlin, 1991.

\bibitem{Vlasov}
A.~Vlasov.
\newblock On vibration properties of electron gas.
\newblock {\em J. Exp. Theor. Phys.}, 8(3):291--318, 1938.
\newblock (in Russian).

\bibitem{Vlasov_book}
A.~Vlasov.
\newblock {\em Many-particle theory and its application to plasma}.
\newblock Gordon and Breach, New York, 1961.

\end{thebibliography}

\end{document}